\newtheorem{theorem}{Theorem}
\newtheorem{prop}[theorem]{Proposition}
\newtheorem{lemma}[theorem]{Lemma}
\theoremstyle{definition}
\newcommand{\F}{{F}}
\newcommand{\PP}{\mathbb P}
\newcommand{\R}{\mathbb R}
\newcommand{\C}{\mathbb C}
\newcommand{\CP}{\C\PP}
\newcommand{\CH}{\C{\mathrm H}}
\newcommand{\Lie}{{\mathcal L}}
\def\({\left(}
\def\){\right)}
\def\<{\left <}
\def\>{\right >}
\def\a {\alpha}
\def\b {\beta}
\def\l {\lambda}
\newcommand{\I}{{\mathcal I}}
\newcommand{\w}{\omega}
\newcommand{\bv}{\mathbf v}
\newcommand{\JJ}{{\mathrm J}} 
\newcommand{\RR}{{\sf R}} 
\newcommand{\nat}{\widetilde\nabla}
\newcommand{\RRt}{\widetilde \RR}
\newcommand{\mt}{\widetilde M}
\newcommand{\Mt}{\widetilde M}
\def\intprod{\mathbin{\raisebox{.4ex}{\hbox{\vrule height .5pt width
5pt depth 0pt %
         \vrule height 3pt width .5pt depth 0pt}}}}
\def\&{\wedge}
\begin{document}
\title{The structure Jacobi operator for hypersurfaces in $\CP^2$ and $\CH^2$}
\author{Thomas A. Ivey}
\address{Dept. of Mathematics, College of Charleston\\
66 George St., Charleston SC 29424-0001}
\email{IveyT@cofc.edu}
\author{Patrick J. Ryan}
\address{Department of Mathematics and Statistics\\
              McMaster University\\
              Hamilton, ON  Canada L8S 4K1}
\email{ryanpj@mcmaster.ca}
\keywords{Hopf hypersurface, pseudo-Einstein, exterior differential systems}
\subjclass{53C25, 53C40, 53C55, 58A15}
\date{\today}
\maketitle
\begin{abstract}
Using the methods of moving frames, we study real hypersurfaces in complex projective space
$\CP^2$ and  complex hyperbolic space $\CH^2$ whose
structure Jacobi operator has various special properties.
Our results complement work of several other authors who worked in $\CP^n$ and $\CH^n$ for $n \ge 3$.

\end{abstract}

\section{Introduction}

The complete simply connected K\"ahler manifolds of nonzero
constant holomorphic curvature are the complex space forms $\CP^n$
and $\CH^n$.  Takagi \cite{takagi1975}, for $\CP^n$ and
Montiel \cite{montiel}, for $\CH^n$,
catalogued a specific list of real hypersurfaces
which may be characterized as the homogeneous Hopf
hypersurfaces.  Other
characterizations of these hypersurfaces have been derived over the years, both in terms
of extrinsic information (such as properties of the shape
operator) and intrinsic information (such as properties of the
curvature tensor).  In both cases, the interaction of these
geometric objects with the complex structure has played an
important role.

Occurring as a real hypersurface in $\CP^n$ or $\CH^n$ places
significant restrictions on the geometry of a Riemannian manifold
$M$ and on the way it is immersed.  For example, it is known that
such an $M$ cannot be Einstein (an intrinsic condition) or umbilic (an
extrinsic condition).  In fact, neither the Ricci tensor nor the
shape operator can be parallel.  Nevertheless, elements of the
lists of Takagi and Montiel enjoy many nice properties and
geometers have been successful in characterizing them in terms of
these properties.

Recently,  the {\it structure Jacobi
operator} has been an object of study and various non-existence and
classification results are now known for $n \ge 3$. Unfortunately,
the methods of proof used in establishing these results do not
carry over to the case $n=2$. In this paper, we obtain
corresponding results for  $\CP^2$ and $\CH^2$ using the
method of moving frames, along with the theory of exterior differential systems.

In what follows, all manifolds are assumed connected and all
manifolds and maps are assumed smooth $(C^\infty)$ unless stated otherwise.  Basic
notation and historical information for hypersurfaces in complex space forms
may be found in \cite{nrsurvey}.  For more on
moving frames and exterior differential systems, see the monograph
\cite{BCG3} or the textbook \cite{cfb}.

\subsection{Hypersurfaces in Complex Space Forms}

Throughout this paper, we will take the holomorphic sectional curvature of the complex
space form in question to be $4c$.  The curvature operator $\RRt$ of the
space form satisfies
\begin{equation}\label{ambientcurvature}
\RRt(X,Y) = c (X \wedge Y + JX \wedge JY + 2\langle X, J Y\rangle J)
\end{equation}
for tangent vectors $X$ and $Y$ (cf. Theorem 1.1 in \cite{nrsurvey}),
where $X\wedge Y$ denotes the skew-adjoint operator defined by
$$(X \wedge Y) Z = \langle Y, Z\rangle X - \langle X, Z\rangle Y.$$
We will denote by $r$ the positive number such
that $c = \pm 1/r^2$.  This is the same convention as used in (\cite{nrsurvey}, p. 237).

A real hypersurface $M$ in $\CP^n$ or $\CH^n$ inherits two
structures from the ambient space.  First, given a unit normal
$\xi$, the {\em structure vector field} $W$ on $M$ is defined so
that
$$\JJ W = \xi, \qquad W \in TM,$$
where $\JJ$ is the complex structure.
This gives an orthogonal splitting of the tangent space
$$TM = \operatorname{span} \{W\} \oplus W^\perp.$$
Second, on the tangent space we define a linear operator $\varphi$ which is the complex
structure $\JJ$ followed by projection onto $TM$:
$$\varphi X = \JJ X - \langle X,W\rangle \xi, \qquad \varphi : TM \to TM.$$

Recall that, for a tangent vector field $V$ on
a Riemannian manifold, the {\em Jacobi operator} $R_V$
is a tensor field of type $(1,1)$ satisfying
$$R_V(X) = \RR(X,V)V,$$
where $\RR$ denotes the Riemannian curvature tensor of type $(1,3)$.  Note
that, because of the symmetries of the curvature tensor, $R_V$ is self-adjoint
and $R_V V =0$.
For a real hypersurface in a complex space form in particular,
and $V=W$ (the structure vector),
$R_W$ is called the {\em structure Jacobi operator}.
In this paper, we will characterize certain hypersurfaces in
$\CP^2$ and $\CH^2$ in terms of the structure Jacobi operator.

Some of the results we will state involve the notion of
{\em Hopf hypersurfaces}.
A hypersurface $M$ in a complex space form is said to be a Hopf
hypersurface if the structure vector $W$ is a principal vector,
(i.e. $AW = \alpha W$, where $A$ is the shape operator).   It is a
non-obvious fact (proved by Y. Maeda \cite{YMaeda} for $\CP^n$
and by Ki and Suh \cite{KiSuh} for $\CH^n$) that the principal curvature $\alpha$ is
(locally) constant.  We refer to $\alpha$ as the {\it Hopf principal curvature} following
Martins \cite{martins}.
For an arbitrary oriented hypersurface in a complex space form, we define
the function
$$\alpha = \langle A\,W, W\rangle.
$$
\noindent
Of course, $\alpha$ need not be constant in general.

We also recall the notion of {\em pseudo-Einstein hypersurfaces}.
A real hypersurface $M$ in a complex space form is said to be
pseudo-Einstein if there are constants $\rho$ and $\sigma$ such that
the Ricci (1,1)-tensor $S$ of $M$ satisfies
$$S X = \rho X + \sigma \langle X,W\rangle W$$
for all tangent vectors $X$.

\subsection{Summary of results}

We summarize results of Perez and collaborators on hypersurfaces
satisfying conditions involving the structure Jacobi operator:
\begin{theorem}[Ortega-Perez-Santos \cite{Ortega2006}] \label{OPStheorem}
Let $M^{2n-1}$, where $n \ge 3$, be a real hypersurface in $\CP^n$ or $\CH^n$.
Then the structure Jacobi operator $R_W$ cannot be parallel.
\end{theorem}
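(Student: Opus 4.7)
The strategy is a proof by contradiction: assume $\nabla R_W \equiv 0$ and derive an inconsistency. By the Gauss equation and \eqref{ambientcurvature},
$$R_W X = c\bigl(X - \langle X, W\rangle W\bigr) + \alpha\, AX - \langle AX, W\rangle\, AW,$$
where $\alpha = \langle AW, W\rangle$. The key differential consequence is obtained by applying the parallelism hypothesis to $W$: since $R_W W = 0$ and $\nabla_X W = \varphi A X$, one gets $R_W(\varphi A X) = 0$ for every $X$, so $\operatorname{Image}(\varphi A) \subset \ker R_W$. Because $R_W$ is self-adjoint and parallel, each eigenspace of $R_W$ is a smooth parallel distribution on $M$, a strong global integrability condition.

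The crucial first reduction is to show that $M$ must be Hopf. Write $AW = \alpha W + \beta U$ with $U \perp W$ a unit vector and $\beta \ge 0$; then $\varphi U \in \ker R_W$, and differentiating this inclusion along arbitrary directions, combined with the Codazzi equation and the parallelism of $\ker R_W$, should force $\beta \equiv 0$. This is the main obstacle, and it is precisely where the hypothesis $n \ge 3$ enters: when $n = 2$ the subspace $\operatorname{span}\{W, U, \varphi U\}$ already exhausts $TM$, so one is deprived of the independent auxiliary directions needed to close off the system of algebraic relations produced by Codazzi. Once $\beta \equiv 0$, the theorems of Y.\ Maeda and Ki-Suh cited above guarantee that $\alpha$ is locally constant.

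Assume then $AW = \alpha W$ with $\alpha\ne 0$. On $W^\perp$ the operator simplifies to $R_W X = cX + \alpha AX$, so the identity $R_W(\varphi A X) = 0$ becomes
$$A(\varphi A X) = -\tfrac{c}{\alpha}\,\varphi A X.$$
For a principal vector $X \in W^\perp$ with $AX = \lambda X$ and $\lambda \ne 0$, this gives $A\varphi X = -(c/\alpha)\varphi X$. Combined with the well-known Hopf identity $(2\lambda - \alpha)A\varphi X = (\alpha\lambda + 2c)\varphi X$, one short algebraic manipulation pins every nonzero principal curvature in $W^\perp$ to the single value $\lambda_0 := -c\alpha/(\alpha^2 + 2c)$, and forces $\varphi$ to interchange the $\lambda_0$-eigenspace with the $(-c/\alpha)$-eigenspace. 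A direct check shows $\lambda_0 \ne -c/\alpha$ whenever $c \ne 0$; but this eigenspace-swapping behaviour is incompatible with $\varphi$ being an involution (up to sign) on the whole of $W^\perp$ once $\dim W^\perp = 2n-2 \ge 4$, and a careful dimension count rules out the possibility of a nontrivial $0$-eigenspace absorbing the discrepancy. The residual Hopf subcase $\alpha = 0$ is handled on its own: then $R_W X = c\bigl(X - \langle X, W\rangle W\bigr)$, and $R_W(\varphi A X) = 0$ together with $c \ne 0$ forces $\varphi A \equiv 0$, hence $AX \in \operatorname{span}\{W\}$ for all $X$; since $AW = 0$, this yields $A \equiv 0$, contradicting the non-existence of totally geodesic real hypersurfaces in $\CP^n$ and $\CH^n$.
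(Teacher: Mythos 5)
There is a genuine gap at the crux of your argument: the reduction to the Hopf case. You write that differentiating the inclusion $\varphi U\in\ker R_W$, ``combined with the Codazzi equation and the parallelism of $\ker R_W$, \emph{should} force $\beta\equiv 0$.'' This is precisely the step that carries the content of the theorem, and it is asserted rather than proved. Nothing in the proposal identifies which Codazzi components are used, why the resulting algebraic relations close up, or where exactly $n\ge 3$ is needed; the paper's own treatment of the analogous non-Hopf situation in dimension $3$ (Lemma \ref{parallellemma}) requires a full exterior-differential-systems prolongation argument, which is evidence that this reduction is not routine. Your Hopf-case analysis is essentially sound (indeed, since $-c/\alpha$ is itself a nonzero principal curvature, comparing it with $\lambda_0$ already yields $c=0$ in any dimension, so the appeal to $\dim W^\perp\ge 4$ is not where the dimension hypothesis genuinely enters), but it only ever uses the single consequence $R_W\circ\varphi A=0$, and it cannot rescue the missing non-Hopf step.

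You have also missed a much shorter route that the paper takes and that renders the Hopf/non-Hopf dichotomy unnecessary at the level of the differential hypothesis. If $\nabla R_W=0$, then by the Ricci identity every curvature operator commutes with $R_W$, so
$$0=\RR(V,W)\,R_W W=R_W\,\RR(V,W)W=R_W^2V$$
for all $V$; since $R_W$ is self-adjoint, $R_W\equiv 0$ (Proposition \ref{parallelprop}, valid for all $n\ge 2$). The entire problem thus collapses to the nonexistence of hypersurfaces with vanishing structure Jacobi operator, which is the substance of the cited result of Ortega--Perez--Santos for $n\ge 3$ and of the paper's Proposition \ref{nonvanish} together with Lemma \ref{parallellemma} for $n=2$. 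Your pointwise formula for $R_W$ and the identity $R_W(\varphi AX)=0$ are correct, but as it stands the proposal proves the theorem only under the additional unproved assumption that $M$ is Hopf.
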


\begin{theorem}[Perez-Santos \cite{Perez2005a}]\label{LieParalleltheorem}
Let $M^{2n-1}$, where $n \ge 3$, be a real hypersurface in $\CP^n$.
Then the Lie derivative $\Lie_V R_W$  of the structure Jacobi operator cannot vanish for all
tangent vectors $V$.
\end{theorem}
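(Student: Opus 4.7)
The plan is to show that the hypothesis forces $R_W \equiv 0$ on $M$, and then to contradict this using the Gauss and Codazzi equations; the obstruction in dimension $n \ge 3$ will come from the Codazzi analysis.

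For the first step, I would use the standard identity
$$(\Lie_V T)(X) = (\nabla_V T)(X) + [T, \nabla V](X)$$
for any $(1,1)$-tensor $T$, where $\nabla V$ denotes the endomorphism $X \mapsto \nabla_X V$. At each $p \in M$ and each prescribed $L \in \operatorname{End}(T_p M)$, I can choose a smooth vector field $V$ with $V(p) = 0$ and $(\nabla V)|_p = L$. The vanishing hypothesis then yields $[R_W|_p, L] = 0$ for every such $L$, so $R_W|_p$ must be a scalar multiple of the identity. Since $R_W W = 0$, that scalar is zero, and hence $R_W \equiv 0$ on $M$.

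Next, the Gauss equation combined with \eqref{ambientcurvature} gives
$$R_W(X) = c(X - \langle X, W\rangle W) + \alpha AX - \langle AX, W\rangle AW, \qquad \alpha = \langle AW, W\rangle.$$
Decomposing $AW = \alpha W + U$ with $U \in W^\perp$ and setting $R_W \equiv 0$, an elementary dimension argument on $W^\perp$ forces $\alpha \ne 0$, and then the identity determines the restriction of $A$ to $W^\perp$:
$$AX = -\tfrac{c}{\alpha} X + \langle X, U\rangle W + \tfrac{1}{\alpha}\langle X, U\rangle U, \qquad X \perp W.$$
In the Hopf subcase $U \equiv 0$, the classical Hopf identity $A\varphi A = \tfrac{\alpha}{2}(A\varphi + \varphi A) + c\varphi$, applied to the single principal curvature $\mu = -c/\alpha$ on $W^\perp$, reduces to $\mu^2 - \alpha\mu - c = 0$, i.e.\ $c^2/\alpha^2 = 0$, which is impossible.

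The main obstacle is the non-Hopf subcase $U \not\equiv 0$. There I would substitute the explicit form of $A$ into the Codazzi equation for a hypersurface in a complex space form and differentiate, using $\nabla_X W = \varphi AX$ and carefully tracking the derivatives of the scalar function $\alpha$ and the vector field $U$. I expect the bookkeeping to collapse to a polynomial relation among $\alpha$, $|U|^2$, and $c$ with no real solutions when $c > 0$. The hypothesis $n \ge 3$ enters precisely here, because $\dim W^\perp = 2n - 2 \ge 4$ leaves test vectors orthogonal to both $U$ and $\varphi U$ with which to extract the needed relations; this freedom fails in dimension $n = 2$, which is exactly what motivates the moving-frames approach of the present paper.
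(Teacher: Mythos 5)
Your first step is correct and is essentially the paper's Lemma \ref{LieParallellemma} in different clothing: the paper gets the pointwise scalarity of $R_W$ by computing $\Lie_{fX}R_W$ and varying $f$, while you get it by prescribing $(\nabla V)|_p=L$ with $V(p)=0$; both legitimately force $[R_W|_p,L]=0$ for all $L$, hence $R_W=\tau I$, hence $R_W\equiv 0$ because $R_W W=0$. Your Gauss-equation computation of $R_W$, the exclusion of $\alpha=0$, the resulting form of $A$ on $W^\perp$, and the Hopf subcase (where Proposition \ref{hopfpc} gives $c^2/\alpha^2=0$) are all fine.

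The genuine gap is the non-Hopf subcase. There you do not give a proof: you substitute the form of $A$ into Codazzi and state that you ``expect the bookkeeping to collapse to a polynomial relation \dots with no real solutions.'' That expectation is precisely the hard content of the theorem, and nothing in your write-up establishes it; as it stands the argument is incomplete exactly where completeness matters. (It is also worth noting that the claimed mechanism --- test vectors orthogonal to both $U$ and $\varphi U$, available only when $\dim W^\perp\ge 4$ --- is unverified; you would need to actually exhibit the contradiction.) The paper sidesteps this entirely: once $R_W\equiv 0$ is known, it is trivially parallel, so Theorem \ref{OPStheorem} (Ortega--P\'erez--Santos, non-existence of parallel $R_W$ for $n\ge 3$) finishes the proof in one line, and that citation also covers $\CH^n$. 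If you want a self-contained argument you must either carry out the Codazzi computation in full or import Theorem \ref{OPStheorem} as the paper does; note that for $n=2$ the non-Hopf case genuinely resists this kind of pointwise algebra and is handled in the paper by the exterior differential systems analysis of Lemma \ref{parallellemma}.
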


Weakening the hypothesis of Theorem \ref{LieParalleltheorem}, Perez et al. \cite{Perez2005b}
were able to prove the following.

\begin{theorem}[Perez-Santos-Suh]\label{PSSthm}
Let $M^{2n-1}$, where $n \ge 3$, be a real hypersurface in $\CP^n$.  If $\Lie_W R_W = 0$, then $M$ is a Hopf hypersurface.
If the Hopf principal curvature $\alpha$
is nonzero, then $M$ is locally congruent to a geodesic sphere or a tube over a totally geodesic $\CP^k$, where
$ 0 < k < n-1.$
\end{theorem}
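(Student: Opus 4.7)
My plan is to translate $\Lie_W R_W = 0$ into algebraic constraints on the shape operator $A$. The Gauss equation together with \eqref{ambientcurvature} gives
\begin{equation*}
R_W X \;=\; c\bigl(X - \langle X,W\rangle W\bigr) + \alpha A X - \langle AX,W\rangle AW, \qquad \alpha = \langle AW,W\rangle,
\end{equation*}
while the structure equation $\nabla_X W = \varphi A X$ (so that $\nabla_W W = \varphi AW$) allows one to expand $(\Lie_W R_W)X = (\nabla_W R_W)X + R_W(\varphi AX) - \varphi A(R_W X)$. I will analyze the vanishing of this tensor by projecting onto the $W$-direction and onto $W^\perp$.

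\emph{Step 1: $M$ is Hopf.} The $W$-component of $(\Lie_W R_W)V = 0$ collapses, using $R_W W = 0$ and the self-adjointness of $R_W$, to $\langle \nabla_W(R_W V), W\rangle = -\langle R_W V, \varphi AW\rangle = 0$ for every $V$, hence $R_W(\varphi AW) = 0$. Setting $Z := \varphi AW \in W^\perp$ and noting $\langle AZ, W\rangle = \langle \varphi AW, AW\rangle = 0$ (since $\varphi$ is skew-adjoint), the explicit formula for $R_W$ reduces to $cZ + \alpha AZ = 0$. Where $\alpha = 0$, $c \neq 0$ forces $Z = 0$; where $\alpha \neq 0$, either $Z = 0$ or $AZ = -(c/\alpha) Z$. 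To eliminate the second alternative, exploit $(\Lie_W R_W)X = 0$ for $X \in W^\perp$: pairing with suitable auxiliary directions produces a system of algebraic relations that, thanks to the ampleness of $W^\perp$ when $n \geq 3$, is inconsistent with the existence of a nonzero $Z$ of the predicted eigentype. Therefore $\varphi AW \equiv 0$, equivalently $AW = \alpha W$.

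\emph{Step 2: classification when $\alpha \neq 0$.} By the Maeda/Ki--Suh theorem, $\alpha$ is locally constant, and the Hopf condition simplifies $R_W$ to
\begin{equation*}
R_W X \;=\; \alpha A X + c X - (c + \alpha^2)\langle X, W\rangle W.
\end{equation*}
The Codazzi consequence $(\nabla_W A)Y = (\nabla_Y A) W + c \varphi Y$ together with $AW = \alpha W$ yields, for $Y \in W^\perp$,
\begin{equation*}
(\Lie_W A)Y \;=\; \varphi\bigl(\alpha A Y + c Y - A^2 Y\bigr).
\end{equation*}
A short check shows that the Lie derivative of the tensor $X \mapsto \langle X, W\rangle W$ annihilates $W^\perp$ on a Hopf hypersurface, so $(\Lie_W R_W)Y = \alpha (\Lie_W A) Y$ on $W^\perp$. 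With $\alpha \neq 0$, $\ker \varphi = \operatorname{span}(W)$, and the expression $\alpha A Y + c Y - A^2 Y \in W^\perp$, we deduce $A^2 Y = \alpha A Y + c Y$ on $W^\perp$. Thus the principal curvatures on $W^\perp$ lie among the two roots $\lambda_\pm = (\alpha \pm \sqrt{\alpha^2 + 4c})/2$, satisfying $\lambda_+ + \lambda_- = \alpha$ and $\lambda_+\lambda_- = -c$. The Berndt identity $\lambda\mu = c + \alpha(\lambda+\mu)/2$ for $\varphi$-paired principal curvatures on a Hopf hypersurface excludes the possibility that $\varphi$ swaps the $\lambda_\pm$-eigenspaces, since that would demand $\alpha^2 = -4c$, impossible when $c > 0$. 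Hence $\varphi$ preserves each eigenspace, $\varphi A = A \varphi$ on all of $TM$, and Okumura's characterization of Hopf hypersurfaces in $\CP^n$ with $\varphi A = A \varphi$ identifies $M$ as locally congruent to a geodesic sphere or a tube of constant radius over a totally geodesic $\CP^k$ with $0 < k < n - 1$.

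\emph{Main obstacle.} The crux of the argument is Step 1: the identity $R_W(\varphi AW) = 0$ alone does not force $\varphi AW = 0$, and the additional relations needed to exclude the eigenvalue $-c/\alpha$ for $Z$ are delicate and use $n \geq 3$ in an essential way. This is precisely the obstruction that the present paper must circumvent in the $\CP^2$ case by appealing to moving frames and exterior differential systems.
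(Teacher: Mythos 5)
First, note that the paper does not prove Theorem \ref{PSSthm} at all --- it is quoted from \cite{Perez2005b} as background, and the paper only proves the $n=2$ analogue in \S\ref{LWRW} --- so there is no in-paper argument to match yours against line by line. Judged on its own terms, your Step 2 is essentially correct and complete: the Hopf form of $R_W$, the Codazzi consequence $(\Lie_W A)Y=\varphi(\alpha AY+cY-A^2Y)$, the resulting identity $A^2=\alpha A+cI$ on $W^\perp$, the use of Proposition \ref{hopfpc} to force $\varphi A=A\varphi$ (you should first record that $\varphi X$ is automatically principal in this situation, e.g.\ via the standard Hopf identity $A\varphi A=\tfrac{\alpha}{2}(A\varphi+\varphi A)+c\varphi$, before invoking that proposition), and the appeal to Okumura's theorem all check out, and in fact closely mirror the paper's own Hopf-case computation in \S\ref{LWRWHopf}.

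The genuine gap is Step 1, which is the heart of the theorem. You correctly derive $R_W(\varphi AW)=0$ from the $W$-component of $\Lie_W R_W=0$ and reduce to the alternative ``$\varphi AW=0$, or else $\alpha\neq0$ and $A(\varphi AW)=-(c/\alpha)\,\varphi AW$,'' but the elimination of the second branch is replaced by the sentence that pairing with ``suitable auxiliary directions produces a system of algebraic relations that \dots is inconsistent.'' No such relations are exhibited, and you concede as much in your closing paragraph. This is not a routine omission: the paper's own analysis (Propositions \ref{necessary} and \ref{LWRWprop}) shows that the pointwise algebraic consequences it extracts from $\Lie_W R_W=0$ are perfectly consistent with a non-Hopf shape operator of exactly this eigentype ($\mu=0$, $\lambda=\nu=-c/\alpha$), and ruling that configuration out for $n=2$ requires the full integrability analysis of Lemma \ref{LWRWlemma}. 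Whether for $n\geq3$ purely pointwise identities suffice, as you assert, or whether one must differentiate the condition further as Perez--Santos--Suh do, is precisely the content you have skipped; as written, your argument establishes only the classification in the Hopf case, conditional on the unproven first assertion.
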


In \S\ref{first} we extend Theorem \ref{OPStheorem} to the case $n=2$, while
at the end of \S\ref{LWRW} we extend Theorem \ref{LieParalleltheorem} to the case $n=2$ for both $\CP^2$ and $\CH^2$.
We find that the analogue of Theorem \ref{PSSthm} for $n=2$ is essentially the same,
and is valid for $\CH^2$ as well as $\CP^2$.  Specifically,
in \S \ref{LWRW}, we prove

\begin{theorem} \label{LWRWtheorem}
Let $M^3$ be a real hypersurface in $\CP^2$ or $\CH^2$.  Then the identity  $\Lie_W R_W =0 $  is satisfied if and
only if $M$ is a pseudo-Einstein hypersurface.
\end{theorem}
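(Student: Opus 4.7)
The plan is to work in a local orthonormal frame $\{W, e_1, e_2\}$ adapted to the complex structure, with $e_2 = \varphi e_1$. The identity $\nabla_X W = \varphi AX$ expresses the covariant derivative of $W$ in terms of the shape operator $A$. A short computation starting from (\ref{ambientcurvature}) yields $\RRt(X,W)W = c(X - \langle X,W\rangle W)$, so the Gauss equation gives
\[
R_W(X) = c\bigl(X - \langle X,W\rangle W\bigr) + \alpha\, AX - \langle X, AW\rangle AW, \qquad \alpha = \langle AW, W\rangle.
\]
Combined with the formula $\Lie_W T = \nabla_W T + T\circ\nabla W - \nabla W\circ T$ (valid for any $(1,1)$-tensor $T$), this provides an explicit expression for $\Lie_W R_W$ in the components of $A$ and their $W$-derivatives.

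For the converse direction, suppose $M$ is pseudo-Einstein with constants $\rho$ and $\sigma$. Since $\dim M = 3$ the full Riemann tensor is determined by the Ricci tensor (the Weyl tensor vanishes), and substituting $SX = \rho X + \sigma\langle X,W\rangle W$ into this representation yields the simple form
\[
R_W(X) = \tfrac12(\rho+\sigma)\bigl(X - \langle X, W\rangle W\bigr).
\]
A short computation using the Lie-derivative formula above then reduces $\Lie_W R_W$ to $-\tfrac12(\rho+\sigma)\langle X, \varphi AW\rangle W$. To complete the argument, combine pseudo-Einstein with the hypersurface Ricci identity
\[
SX = 5cX - 3c\langle X,W\rangle W + hAX - A^2 X, \qquad h=\operatorname{tr}A,
\]
(which comes from the Gauss equation), and apply both sides to $AW$; after simplification one obtains $(3c+\sigma)(AW-\alpha W)=0$. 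Either $M$ is Hopf --- in which case $\varphi AW = 0$ and the conclusion is immediate --- or $3c+\sigma = 0$, whereupon $A^2 = hA$; a separate analysis using the Codazzi equation shows that this degenerate case also forces $M$ to be Hopf.

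For the forward direction, assume $\Lie_W R_W = 0$. Expanding this identity component-wise in the adapted frame produces a system of nonlinear equations on the entries of $A$ and their derivatives along $W$. Coupled with the Codazzi and Gauss equations --- encoded in the structure equations of the exterior differential system for the immersion $M \hookrightarrow \CP^2$ or $\CH^2$ --- and combined with the hypersurface Ricci identity displayed above, this system should reduce, with a natural case analysis according as $\alpha$ vanishes and according to the eigenvalue configuration of $A|_{W^\perp}$, to the conditions $S = \rho I + \sigma W\otimes W^\flat$ with $\rho, \sigma$ constant. This forward implication is the main obstacle: the equations are nonlinear in $A$ and a separate handling of the degenerate configurations is unavoidable, but the EDS machinery employed elsewhere in the paper is what keeps the bookkeeping tractable and permits a systematic closure of the system.
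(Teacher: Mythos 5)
Your converse direction is a genuinely different (and mostly sound) route: you exploit the vanishing of the Weyl tensor in dimension~3 to get $R_W = \tfrac12(\rho+\sigma)(I - W\otimes W^\flat)$ directly from the pseudo-Einstein condition, whereas the paper instead quotes the classification of pseudo-Einstein hypersurfaces in $\CP^2$ and $\CH^2$ (Kim--Ryan, Ivey--Ryan) to see that such $M$ is Hopf and satisfies one of four algebraic conditions, and then verifies $\Lie_W R_W=0$ case by case. However, even your converse has an unclosed step: from $(3c+\sigma)(AW-\alpha W)=0$ you must still dispose of the degenerate branch $\sigma=-3c$, and you only assert that ``a separate analysis using the Codazzi equation'' forces $M$ to be Hopf there. (Also, $\sigma=-3c$ gives $hA-A^2=(\rho-5c)I$, not $A^2=hA$.) Ruling out non-Hopf pseudo-Einstein hypersurfaces is precisely the nontrivial content of the cited classifications, so this cannot be waved through.

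The forward direction is where the real gap lies: you state that expanding $\Lie_W R_W=0$ in an adapted frame ``should reduce'' to the pseudo-Einstein conditions, but no reduction is carried out. The paper's proof of this implication has a definite architecture that your sketch does not reproduce. First, the self-adjointness of $\nabla_W R_W$ yields the algebraic necessary condition $[R_W,[\varphi,A]]=0$ (Proposition~\ref{necessary}); in the non-Hopf case this pins the shape operator down to the one-parameter family \eqref{LWRWshapematrix} ($\mu=0$, $\lambda=\nu=-c/\alpha$). The crux is then Lemma~\ref{LWRWlemma}: no hypersurface with that shape operator exists. This is not a formal consequence of the frame equations --- it requires setting up the Pfaffian system on $\F_u\times\Sigma$, prolonging once, and finding the torsion obstruction $24\,c\beta^2/\lambda\;\w^1\&\w^2\&\w^3\ne 0$. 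Your proposal invokes ``the EDS machinery employed elsewhere in the paper'' without performing this computation, and there is no a priori reason the system closes up with a contradiction rather than admitting solutions. Finally, even after reducing to the Hopf case, one must match the resulting conditions on $(\alpha,\lambda,\nu)$ (Proposition~\ref{LWRWHopfprop}) against the pseudo-Einstein classification; that identification is also absent from your argument. As written, the proposal proves neither implication in full.
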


It is not immediately obvious that Theorem \ref{LWRWtheorem} is, in fact, the extension of Theorem \ref{PSSthm}
to $\CP^2$ and to $\CH^2$.  The analogue of Theorem \ref{PSSthm} for $n=2$ would say that a hypersurface
$M^3$ in $\CP^2$ with $\Lie_W R_W =0 $ must be an open subset of a geodesic sphere or a Hopf hypersurface
with $\alpha = 0$.  However, the classification of pseudo-Einstein hypersurfaces in $\CP^2$ by Kim and Ryan \cite{kimryan2}
yields exactly the same list of hypersurfaces.  The classification of pseudo-Einstein hypersurfaces
in $\CH^2$ by Ivey and Ryan \cite{IveyRyan} yields an analogous list -- open subsets of horospheres, geodesic spheres,
tubes over $\CH^1$, and Hopf hypersurfaces with $\alpha = 0$.

It is not hard to check that every Hopf hypersurface
with $\alpha = 0$ (in $\CH^n$ as well as in $\CP^n$)
for $n \ge 2$, satisfies $\Lie_W R_W = 0$.
The structure theory for Hopf hypersurfaces
with $\alpha = 0$ is described in \cite{cecilryan, IveyRyan, kimryan2, martins}.
Note that such hypersurfaces need not be pseudo-Einstein when $n \ge 3$.  On the other hand, there are some
pseudo-Einstein hypersurfaces in $\CP^n$, where $n \ge 3$, that do not satisfy
$\Lie_W R_W =0 $.  Thus one cannot restate Theorem \ref{PSSthm}
in terms of the pseudo-Einstein condition.

Finally, we observe that the condition considered in Theorem \ref{LieParalleltheorem} is actually quite strong.  In
\S \ref{LieParallelSection} we provide a new proof of this theorem that is also valid for $\CH^n$.

\section{Basic Equations}
\label{basic}

In this and subsequent sections, we follow the notation and terminology of \cite{nrsurvey}:
$M^{2n-1}$ will be a hypersurface
in a complex space form $\mt$ (either $\CP^n$ or $\CH^n$) having constant holomorphic sectional curvature $4c\ne 0$.
The structures $\xi$, $W$, and $\varphi$ are
as defined in the Introduction. The $(2n-2)$-dimensional
distribution $W^\perp$ is called the {\it holomorphic distribution}. The operator $\varphi$ annihilates
$W$ and acts as complex structure on $W^\perp$. The shape
operator $A$ is defined by
$$A X = -\nat_X \xi$$
where $\nat$ is the Levi-Civita connection of the ambient space.
The Gauss equation expresses the curvature operator of $M$ in terms
of $A$ and $\varphi$, as follows:
\begin{equation}\label{gausseq}
\RR(X,Y)=AX\& AY+c\(X\& Y+\varphi X\& \varphi Y +2 \<X,\varphi Y\>
\varphi\).
\end{equation}
\noindent
In addition, it is easy to show (see \cite{nrsurvey}, p. 239) that
\begin{equation} \label{nablaW}
\nabla_X W=\varphi A X,
\end{equation}
where $\nabla$ is the Levi-Civita connection of the hypersurface $M$.

Consider now the case $n=2$, so that $M^3$ is a hypersurface in $\CP^2$ or $\CH^2$.
Suppose that there is a point $p$ (and hence an open neighborhood of $p$) where $AW \ne \alpha W$.
Then there is a positive function $\beta$ and a unit vector field $X \in W^\perp$ such that
$$AW = \alpha W + \beta X.$$
Let $Y = \varphi X$.  Then there are smooth functions $\lambda$, $\mu$, and $\nu$ defined near $p$ such
that with respect to the orthonormal frame $(W,X,Y)$,
\begin{equation} A =
 \begin {pmatrix} \a&\b& 0 \\
                \b &\l & \mu \\
                            0&\mu&\nu \\
\end {pmatrix}\label{shapematrix}.
\end{equation}
A routine computation, using the Gauss equation \eqref{gausseq}, yields
\begin{equation} R_W =
 \begin {pmatrix}\label{jacobimatrix} 0&0& 0 \\
                0 &\a\l+c-\b^2 & \a\mu \\
                            0&\a\mu&\a\nu+c \\
\end {pmatrix}.
\end{equation}

Consider now a point where $AW = \a W$.  Let $X$ be a unit principal vector in $W^\perp$ and let $Y = \varphi X$.
Then there are numbers $\a$, $\l$ and $\nu$ such that equations \eqref{shapematrix} and \eqref{jacobimatrix}
still hold at this point, but with $\b = \mu =0$.

In this connection, we recall the following useful fact (\cite{nrsurvey}, p. 246.)
\begin{prop}\label{hopfpc} Let $M^{2n-1}$, where $n \ge 2$, be a Hopf hypersurface in $\CP^n$ or $\CH^n$
with Hopf principal curvature $\alpha$.
If $X$ is a unit vector in $W^\perp$ such that $AX = \l X$ and $A \varphi X = \nu \varphi X$, then
\begin{equation}
\l \nu =\frac{\l + \nu}{2}\ \alpha +c.
\end{equation}
\end{prop}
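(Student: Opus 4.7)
The plan is to derive this identity from the Codazzi equation for the hypersurface $M$, specialized to the pair $(X,\varphi X)$ and then paired against $W$. This is the standard route for extracting principal-curvature relations on a Hopf hypersurface from the ambient curvature.

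First, I would record the Codazzi equation. Taking the tangential part of $\RRt(X,Y)\xi$, using \eqref{ambientcurvature} together with $JZ = \varphi Z + \langle Z,W\rangle \xi$ and $J\xi = -W$, yields
\begin{equation*}
(\nabla_X A)Y - (\nabla_Y A)X \,=\, c\bigl(\langle X,W\rangle\,\varphi Y - \langle Y,W\rangle\,\varphi X + 2\langle X,\varphi Y\rangle\, W\bigr).
\end{equation*}
Setting $Y=\varphi X$, the first two terms on the right vanish because $X,\varphi X \in W^\perp$, and since $\varphi^2 = -I$ on $W^\perp$ the remaining term collapses to $-2cW$.

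Next, on the left I would expand using $AX=\lambda X$, $A\varphi X=\nu\varphi X$, and the Hopf condition $AW=\alpha W$ with $\alpha$ locally constant (by the cited results of Y.~Maeda and of Ki--Suh). This gives
\begin{align*}
(\nabla_X A)\varphi X &= X(\nu)\,\varphi X + \nu\nabla_X\varphi X - A\nabla_X\varphi X,\\
(\nabla_{\varphi X} A)X &= (\varphi X)(\lambda)\,X + \lambda\nabla_{\varphi X}X - A\nabla_{\varphi X}X.
\end{align*}
Pairing with $W$, the scalar-coefficient terms $X(\nu)\varphi X$ and $(\varphi X)(\lambda)X$ drop out, and the relation $\langle AZ,W\rangle = \alpha\langle Z,W\rangle$ converts the rest into $(\nu-\alpha)\langle\nabla_X\varphi X,W\rangle - (\lambda-\alpha)\langle\nabla_{\varphi X}X,W\rangle$. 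The two remaining inner products are computed from \eqref{nablaW}:
\begin{equation*}
\langle\nabla_X\varphi X,W\rangle = -\langle\varphi X,\varphi AX\rangle = -\lambda, \qquad \langle\nabla_{\varphi X}X,W\rangle = -\langle X,\varphi A\varphi X\rangle = \nu.
\end{equation*}

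Substituting these values into $\langle\text{LHS},W\rangle = -2c$ gives
$-2\lambda\nu + \alpha(\lambda+\nu) = -2c$,
which rearranges at once to the advertised identity. The main ``obstacle'' is really just the choice of test vectors: the other natural substitution $Y=W$ forces one to grapple with $\nabla_W X$, whose tangential components are not pinned down by the hypotheses. The choice $Y=\varphi X$ is what makes all the a priori unknown quantities (namely $X(\nu)$, $(\varphi X)(\lambda)$, and the horizontal parts of $\nabla_X\varphi X$ and $\nabla_{\varphi X}X$) drop out after projecting onto $W$, leaving only an algebraic relation between $\lambda$, $\nu$, $\alpha$, and $c$.
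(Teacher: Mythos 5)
The paper offers no proof of this proposition at all --- it is quoted as a known fact from \cite{nrsurvey} (p.~246), where the standard argument runs the Codazzi equation on the pair $(X,W)$ rather than $(X,\varphi X)$: using $AW=\alpha W$, the (local) constancy of $\alpha$, and the self-adjointness of $\nabla_W A$, one extracts the pointwise operator identity
\[
2A\varphi A \;=\; \alpha\,(A\varphi+\varphi A) + 2c\,\varphi ,
\]
and the stated relation then falls out algebraically by applying both sides to the principal vector $X$. Your substitution $Y=\varphi X$ is therefore a genuinely different route. The algebra in it is correct: your Codazzi equation matches \eqref{ambientcurvature}, the values $\langle\nabla_X\varphi X,W\rangle=-\lambda$ and $\langle\nabla_{\varphi X}X,W\rangle=\nu$ follow from \eqref{nablaW}, and $-2\lambda\nu+\alpha(\lambda+\nu)=-2c$ rearranges to the claim.

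There is, however, a gap. Your computation differentiates the relations $AX=\lambda X$ and $A\varphi X=\nu\varphi X$ (the terms $X(\nu)\varphi X$ and $(\varphi X)(\lambda)X$ only make sense this way), so it tacitly assumes that $X$ extends to a smooth local vector field which is principal, with $\varphi X$ also principal, on a whole neighborhood --- whereas the hypothesis of the proposition concerns a single unit vector at a point. For $n=2$ this can be repaired (where the two curvatures of $A|_{W^\perp}$ are distinct the eigenfields are locally smooth, on the interior of the coincidence set any unit field works, and the identity propagates to the remaining points by continuity of the eigenvalues), but you do not address it; for $n\ge 3$ a unit vector $X$ with both $X$ and $\varphi X$ principal need not extend to such a field at all, since eigenvalue multiplicities can change and $\varphi$ need not carry one eigenspace into another at nearby points. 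The $(X,W)$ route avoids this entirely because it produces an identity between operators valid at each point, with no principal frame required. Incidentally, the constancy of $\alpha$ that you invoke via Maeda and Ki--Suh is never actually used in your calculation; only $AW=\alpha W$ at the point enters.
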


\section{Parallelism of $R_W$}\label{first}

\subsection{The condition $\nabla R_W = 0$.}\label{parallel}
We first show that this condition implies $R_W = 0$.
\begin{prop}\label{parallelprop}
 Let $M^{2n-1}$ be a hypersurface in $\CP^n$ or $\CH^n$, where $n \ge 2$.
 If $\nabla R_W = 0$ on $M$, then $R_W = 0$.
\end{prop}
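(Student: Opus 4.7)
The plan is to exploit that $\nabla R_W=0$ forces $R_W$ to have constant eigenvalues and parallel eigenspaces, and then to show that the only eigenvalue of $R_W$ can be zero.

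Since $R_W W=0$ identically, $W$ always lies in the kernel of $R_W$, and it suffices to rule out any nonzero eigenvalue. The Gauss equation \eqref{gausseq} together with $\varphi W=0$ yields the explicit formula
\begin{equation*}
R_W X = \alpha AX - \langle AX,W\rangle AW + cX \quad\text{for } X\in W^\perp.
\end{equation*}
Let $\kappa\ne 0$ be any nonzero eigenvalue of $R_W$, and let $X$ be a local section of the parallel eigenspace $E_\kappa\subseteq W^\perp$. Because $E_\kappa$ is parallel, $\nabla_V X\in W^\perp$, so differentiating $\langle X,W\rangle=0$ and substituting \eqref{nablaW} gives $\langle X,\varphi AV\rangle=0$ for every $V$, and hence $A\varphi X=0$. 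Feeding this back into the explicit formula yields $R_W(\varphi X)=c\,\varphi X$, so $\varphi X\in E_c$. If $\kappa\ne c$, the same argument applied to $\varphi X$ gives $A\varphi^2 X=-AX=0$, and then $R_W X=cX$ contradicts $\kappa\ne c$. Therefore $c$ is the only possible nonzero eigenvalue of $R_W$, the space $E_c$ is $\varphi$-invariant, and $A$ annihilates both $E_c$ and $\varphi E_c$.

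To derive a contradiction from the existence of a nontrivial $E_c$, I would apply the Codazzi equation for the pair $(X,W)$, which for hypersurfaces in a complex space form reduces to $(\nabla_X A)W-(\nabla_W A)X=-c\,\varphi X$. For $X$ a local section of $E_c$, the identities $AX=0$, $\nabla_X W=\varphi AX=0$, and $A\nabla_W X=0$ (because $\nabla_W X\in E_c$ by parallelism) collapse the left-hand side to $\nabla_X(AW)$. On the other hand, $\langle AW,Y\rangle=\langle W,AY\rangle=0$ for all $Y\in E_c$ shows that $AW$ is a section of the parallel complement $E_0=\ker R_W$, so $\nabla_X(AW)\in E_0$. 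Combined with $-c\,\varphi X\in E_c$ and $c\ne 0$, this forces $\varphi X=0$, hence $X=0$; thus $E_c=\{0\}$, and $R_W=0$.

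The main obstacle is the iterative step in the second paragraph that uses $A\varphi X=0$ twice to pin down the only possible nonzero eigenvalue as $c$; once this (together with $A|_{E_c}=0$) is in hand, the Codazzi cancellation is essentially forced by the decomposition $TM=E_0\oplus E_c$ and the parallelism of both summands.
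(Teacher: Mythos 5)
Your argument is correct, but it takes a genuinely different and much heavier route than the paper. The paper's proof is two lines and uses no hypersurface geometry at all: since $R_W$ is parallel, the Ricci identity forces every curvature operator $\RR(X,Y)$ to commute with $R_W$, and applying this with $\RR(V,W)$ to the vector $W$ gives $0=\RR(V,W)R_WW=R_W\RR(V,W)W=R_W^2V$, so $R_W^2=0$ and self-adjointness yields $R_W=0$. That argument proves the stronger, purely Riemannian statement that a parallel Jacobi operator $R_V$ must vanish, with no appeal to the Gauss or Codazzi equations. Your proof instead exploits the hypersurface structure: you use parallelism to get constant eigenvalues and parallel eigendistributions, the Gauss equation to pin down the only possible nonzero eigenvalue as $c$ and to show $A$ annihilates $E_c$, and the Codazzi equation to split $\nabla_X(AW)=-c\,\varphi X$ across the orthogonal parallel decomposition $TM=E_0\oplus E_c$ and force $E_c=0$. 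Each step checks out (including the key computation $A\varphi X=0$ from differentiating $\langle X,W\rangle=0$, and its iteration via $\varphi^2=-I$ on $W^\perp$), and your route has the side benefit of exhibiting the structural constraints a counterexample would have to satisfy; but it is longer, relies on equations the paper never needs here, and does not reveal that the result is independent of the ambient complex space form. You might note the commutation trick, as it is the standard way to exploit parallelism of a curvature-derived tensor.
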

\begin{proof}
Since $R_W$ is parallel, every curvature operator commutes with $R_W$.  Then for any tangent vector $V$,
$$
0 = \RR(V, W) R_W W = R_W \RR(V, W) W = R_W^2 V,$$
and thus $R_W^2=0$.  So $R_W$, being self-adjoint, must also vanish.
\end{proof}

\subsection{The condition $R_W = 0$}
\label{RWzero}

\begin{prop} \label{nonvanish} There are no hypersurfaces in $\CP^2$ or $\CH^2$ such that the structure Jacobi operator
$R_W$ vanishes identically.
\end{prop}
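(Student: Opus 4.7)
The plan is to study the algebraic constraints imposed by $R_W \equiv 0$ at each point via \eqref{shapematrix}--\eqref{jacobimatrix}, and to rule out the Hopf and non-Hopf cases in turn.

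\emph{Hopf case.} Suppose $AW = \alpha W$. Choose a unit principal vector $X \in W^\perp$ with $AX = \lambda X$ and set $Y = \varphi X$ with $AY = \nu Y$; then \eqref{jacobimatrix} is diagonal with entries $0$, $\alpha\lambda + c$, $\alpha\nu + c$. The vanishing of $R_W$ forces $\alpha \ne 0$ (else $c = 0$) and then $\lambda = \nu = -c/\alpha$. Proposition \ref{hopfpc} then gives
$$\frac{c^2}{\alpha^2} = \lambda\nu = \frac{\lambda+\nu}{2}\,\alpha + c = -c + c = 0,$$
contradicting $c \ne 0$.

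\emph{Non-Hopf case.} Suppose $AW \ne \alpha W$ on an open set. Work in the frame \eqref{shapematrix} with $\beta > 0$. The vanishing of \eqref{jacobimatrix} yields $\alpha\mu = 0$ and $\alpha\nu + c = 0$; the second equation forces $\alpha \ne 0$ and $\nu = -c/\alpha$, hence $\mu = 0$, and then the remaining equation gives $\lambda = (\beta^2 - c)/\alpha$. Thus $A$ is entirely determined by $\alpha$ and $\beta$ on this open set.

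To derive a contradiction, I would exploit the Codazzi equation
$$(\nabla_P A)Q - (\nabla_Q A)P = c\bigl[\langle P, W\rangle\,\varphi Q - \langle Q, W\rangle\,\varphi P - 2\langle P, \varphi Q\rangle\,W\bigr].$$
The Levi-Civita connection in the frame $(W, X, Y)$ is determined by \eqref{nablaW} and metric compatibility (giving, for instance, $\nabla_W W = \beta Y$, $\nabla_X W = \lambda Y$, $\nabla_Y W = -\nu X$, with $\mu = 0$). Expanding the Codazzi identity for the pairs $(W,X)$, $(W,Y)$, and $(X,Y)$ componentwise, and substituting the rigid expressions $\nu = -c/\alpha$ and $\lambda = (\beta^2 - c)/\alpha$, produces an overdetermined linear system for the directional derivatives of $\alpha$ and $\beta$ in the directions $W$, $X$, $Y$. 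The main obstacle is the bookkeeping of this computation; however, since the entire shape operator (and hence all connection coefficients) is pinned down by just two functions, I expect the overdetermination to collapse the system to an equation forcing $c = 0$, $\alpha = 0$, or $\beta = 0$, each a contradiction. Alternatively, one may encode the problem as an exterior differential system for the $1$-jet of $(\alpha, \beta)$ and extract the obstruction from the torsion of the associated $G$-structure.
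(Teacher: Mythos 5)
Your Hopf case is correct and is essentially the paper's own argument: the paper deduces $\lambda=\nu\ne 0$ from $\alpha\lambda+c=\alpha\nu+c=0$ and then gets $0=\alpha\lambda+c=\lambda^2$ from Proposition \ref{hopfpc}, which is the same contradiction you obtain. Likewise, your algebraic reduction in the non-Hopf case ($\alpha,\beta\ne 0$, $\mu=0$, $\nu=-c/\alpha$, $\lambda=(\beta^2-c)/\alpha$) reproduces the paper's Proposition \ref{parallelconditions}.

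The gap is in finishing the non-Hopf case. First, your assertion that the Levi-Civita connection in the frame $(W,X,Y)$ is determined by \eqref{nablaW} and metric compatibility is not correct: those data fix only $\langle\nabla_V X,W\rangle$ and $\langle\nabla_V Y,W\rangle$, while the three coefficients $\langle\nabla_V X,Y\rangle$ for $V=W,X,Y$ are additional unknowns (in the paper's moving-frame language, the connection form $\w^3_2$). The Codazzi equations then give nine scalar equations in nine unknowns (six directional derivatives of $\alpha,\beta$ plus these three coefficients), and the paper's computation in the proof of Lemma \ref{parallellemma} shows this first-order system does \emph{not} collapse to a contradiction: it is consistent, with everything determined up to a single residual function $\rho$. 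The contradiction only emerges at second order, after prolonging the system (introducing $\rho$ as a new variable and differentiating again, i.e.\ using $d^2=0$, equivalently the Gauss integrability conditions): one first finds $\rho(\alpha^2-c)=0$, then rules out $\rho\ne 0$, and finally, with $\rho=0$, obtains the incompatible pair $4\alpha^2+\beta^2=c$ and $10\alpha^2\beta^2=c(4\alpha^2+\beta^2-c)$, forcing $\alpha\beta=0$. So the missing idea is precisely this prolongation step; neither the Codazzi system alone nor the first-order torsion of the associated $G$-structure yields the obstruction, and your expectation that the overdetermination resolves at first order is where the argument fails.
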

\begin{proof} We use the setup from \S \ref{basic} with $n=2$.
First look at possibility of a Hopf hypersurface with $R_W$ = 0.  We see from \eqref{jacobimatrix}
with $\beta=0$ that $\a \l +c =\a \nu + c = 0$, so that $\a \ne 0$ and $\nu = \l \ne 0$.  However, in view of
Proposition \ref{hopfpc},  we have $ 0 =  \a \l + c = \l^2$,
which is a contradiction.

The non-Hopf case is handled by the following proposition which follows directly from \eqref{shapematrix}
and \eqref{jacobimatrix}.  Then Lemma \ref{parallellemma} completes our proof.
\end{proof}

\begin{prop}  \label{parallelconditions}
Suppose that $M^3$ is a non-Hopf hypersurface in $\CP^2$ or $\CH^2$ satisfying $R_W=0.$
Then, in a neighborhood of some point $p$, we have
(using the basic setup of \S \ref{basic})
\begin{itemize}
\item $\b$ and $\alpha$ are nonzero;
\item $\mu = 0$;
\item $\b^2 = \a\l+c$;
\item $\a\nu + c =0.$
\end{itemize}
Conversely, every hypersurface satisfying these conditions will have $R_W = 0.$
\end{prop}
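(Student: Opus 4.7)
The plan is to translate the condition $R_W = 0$ into entrywise equations using the explicit matrix form in \eqref{jacobimatrix}, available in a neighborhood of any point $p$ where $AW \ne \alpha W$, as set up in \S\ref{basic}. In such a neighborhood, the function $\beta$ is positive by construction, which already gives one of the four listed conclusions.

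Next, I would simply read off the three nontrivial entries of the symmetric matrix $R_W$ and set them to zero. The $(3,3)$ entry gives $\alpha\nu + c = 0$; since we are in a complex space form of nonzero holomorphic sectional curvature, $c \ne 0$, so this immediately forces $\alpha \ne 0$ and determines $\nu = -c/\alpha$. The $(2,3)$ entry gives $\alpha\mu = 0$, which combined with $\alpha \ne 0$ yields $\mu = 0$. The $(2,2)$ entry gives $\alpha\lambda + c - \beta^2 = 0$, i.e., $\beta^2 = \alpha\lambda + c$. The first row and column of $R_W$ vanish automatically, reflecting the identity $R_W W = 0$, so these three equations exhaust the content of $R_W = 0$.

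For the converse, substituting the four listed conditions into \eqref{jacobimatrix} shows directly that every entry vanishes, so $R_W = 0$. I do not expect any real obstacle: the proposition is essentially a direct reading of the explicit matrix formulas \eqref{shapematrix} and \eqref{jacobimatrix} from the basic setup, exactly as the sentence preceding the statement indicates. The only point requiring care is invoking $c \ne 0$ to extract $\alpha \ne 0$ from the $(3,3)$ equation, a hypothesis built into the standing convention that $4c$ is the (nonzero) holomorphic sectional curvature of the ambient space.
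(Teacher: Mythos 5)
Your proposal is correct and matches the paper's own (very brief) justification, which simply states that the proposition ``follows directly from \eqref{shapematrix} and \eqref{jacobimatrix}'': reading off the $(2,2)$, $(2,3)$, and $(3,3)$ entries of $R_W$, using $c\ne 0$ to get $\alpha\ne 0$ from the $(3,3)$ entry, and noting $\beta>0$ from the non-Hopf setup is exactly the intended argument, as is the direct substitution for the converse.
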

\begin{lemma}\label{parallellemma}
There does not exist a hypersurface in $\CP^2$ or $\CH^2$ satisfying
the conditions of Proposition \ref{parallelconditions}.
\end{lemma}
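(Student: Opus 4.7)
The plan is to exploit the Codazzi equation in the adapted frame $(W, X, Y)$ of \S\ref{basic}, together with the algebraic constraints of Proposition \ref{parallelconditions}, to force $c = 0$, contradicting the standing assumption $c \neq 0$.

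First, I would record the Levi-Civita connection of $M$ in the frame $(W, X, Y)$. By \eqref{nablaW}, the covariant derivatives $\nabla_V W$ for $V \in \{W, X, Y\}$ are determined by the shape operator \eqref{shapematrix} with $\mu = 0$, and anti-symmetry of $\nabla$ in an orthonormal frame then pins down all $W$-components of $\nabla_V X$ and $\nabla_V Y$. What remains are only three scalar unknowns, namely the functions $\langle \nabla_V X, Y\rangle$ for $V = W, X, Y$; every covariant derivative of a frame vector is then an explicit expression in $W, X, Y$ whose coefficients are built from $\alpha, \beta, \lambda, \nu$ and these three functions.

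Second, I would write out the Codazzi equation
\[
(\nabla_U A) V - (\nabla_V A) U = c\bigl(\langle U, W\rangle \varphi V - \langle V, W\rangle \varphi U - 2\langle U, \varphi V\rangle W\bigr)
\]
for the three pairs $(U, V) = (W, X),\ (W, Y),\ (X, Y)$, and extract scalar components against the basis $W, X, Y$. The three specific components that will do the work are the $Y$-component of the $(W, X)$-identity, the $X$-component of the $(W, Y)$-identity, and the $W$-component of the $(X, Y)$-identity.

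Third, after invoking $\beta^2 = \alpha\lambda + c$, the first of these three components collapses to a purely algebraic relation among the connection unknowns, $\lambda$, $\nu$, and $\beta$. Each of the other two components expresses the derivative $Y(\beta)$ in a distinct way in terms of the same data (using $\alpha\nu + c = 0$ as well). Setting these two expressions for $Y(\beta)$ equal and then eliminating the combination of connection unknowns that appears via the first algebraic relation reduces the entire system to $0 = -4c$; since $c \neq 0$, this is the desired contradiction. The main obstacle is bookkeeping — carefully tracking the $Y$-components of $\nabla_V(AU)$ and $A(\nabla_V U)$ without sign error, and consistently applying both constraints. It is notable that no differential consequences of the constraints, no Gauss equation, and no prolongation are required: three scalar Codazzi components combined with the two algebraic constraints of Proposition \ref{parallelconditions} already overdetermine the system.
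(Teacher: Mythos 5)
There is a genuine gap, and it is localized in a sign. The Codazzi equation you wrote has last term $-2\langle U,\varphi V\rangle W$; with the conventions of this paper (curvature operator \eqref{ambientcurvature}, Gauss equation \eqref{gausseq}, $A=-\nat\xi$) the correct right-hand side is $-\bigl(\RRt(U,V)\xi\bigr)^\top$, whose last term is $-2c\langle \varphi U,V\rangle W=+2c\langle U,\varphi V\rangle W$. For the pairs $(W,X)$ and $(W,Y)$ this term vanishes, so your two expressions for $Y\beta$ are unaffected; but for the pair $(X,Y)$ with $Y=\varphi X$ it is the \emph{only} surviving term, and your version gives $+2cW$ where the correct value is $-2cW$. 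That discrepancy of $4cW$ in the one place it matters is exactly your claimed contradiction $0=-4c$. Redoing your elimination with the correct sign, the $W$-component of the $(X,Y)$ identity combined with the $X$-component of the $(W,Y)$ identity reproduces precisely the algebraic relation $a(\lambda-\nu)-\beta\,b+\lambda\nu=0$ coming from the $Y$-component of the $(W,X)$ identity (here $a=\langle\nabla_W X,Y\rangle$, $b=\langle\nabla_X X,Y\rangle$ in your notation); the three components are dependent, not contradictory.

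There is also a structural reason no repair along these lines can work: the paper's first-order analysis, equations \eqref{I1struct}--\eqref{I1pival}, shows that the \emph{full} set of Codazzi equations for this configuration is pointwise solvable, with a one-parameter family of solutions for the first derivatives of $\alpha,\beta$ and the connection coefficients (the parameter $\rho$). One can check directly that the values of $a$, $b$, $Y\alpha$, $Y\beta$ read off from \eqref{I1pival} satisfy all three of your chosen components. So no linear combination of zeroth- and first-order Codazzi information can produce a contradiction; the obstruction genuinely lives at second order. That is why the paper must prolong the system, first showing $\rho\equiv 0$ and then extracting the incompatible pair of relations $4\alpha^2+\beta^2=c$ and $10\alpha^2\beta^2=c(4\alpha^2+\beta^2-c)$ from derivatives of the first-order relations. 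If you want to avoid the exterior-differential-systems language, the analogous elementary route is to solve the Codazzi system for all first derivatives in terms of one free function $\rho$, and then impose the commutation relations $[U,V]f=UVf-VUf$ (equivalently, the Gauss equation for the curvature of the induced connection) on $\alpha$, $\beta$, and $\rho$ -- but some second differentiation is unavoidable.
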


We prove this lemma in \S \ref{movingframes} using exterior differential systems.

\section{Lie Parallelism of $R_W$}\label{LWRW}
We begin by deriving a necessary condition for a hypersurface to satisfy $\Lie_W R_W = 0$.
\begin{prop}\label{necessary}
For any hypersurface in $\CP^n$ or $\CH^n$, where $n \ge 2$, satisfying $\Lie_W R_W = 0$, we must have
\begin{equation}
[R_W, [\varphi, A]] = 0.
\end{equation}
\end{prop}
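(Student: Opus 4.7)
The plan is to convert the Lie-derivative condition $\Lie_W R_W = 0$ into a commutator identity involving $\nabla_W R_W$, and then exploit the self-adjointness of $R_W$ together with the standard symmetries of $\varphi$ and $A$ to peel off the covariant-derivative piece.

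First, I would use the general formula
\[
(\Lie_V T)(X) = (\nabla_V T)(X) - \nabla_{T X} V + T(\nabla_X V),
\]
valid for any $(1,1)$-tensor field $T$ and vector field $V$. Specializing to $V = W$ and $T = R_W$ and substituting $\nabla_X W = \varphi A X$ from \eqref{nablaW} gives
\[
(\Lie_W R_W)(X) = (\nabla_W R_W)(X) - \varphi A(R_W X) + R_W(\varphi A X) = (\nabla_W R_W)(X) + [R_W, \varphi A] X.
\]
Hence the hypothesis is equivalent to the endomorphism identity $\nabla_W R_W = [\varphi A, R_W]$.

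The second step is to take adjoints. Since $R_W$ is self-adjoint by the curvature symmetries and the Levi-Civita connection is metric, the left-hand side $\nabla_W R_W$ is self-adjoint. On the right-hand side $A$ is self-adjoint while $\varphi$ is skew-adjoint (inherited from $\JJ$, since $\langle \JJ X, Y\rangle + \langle X, \JJ Y\rangle = 0$ and $\varphi$ differs from $\JJ$ only by a term normal to $TM$), so $(\varphi A)^* = -A\varphi$. A short bracket-of-adjoints computation yields $[\varphi A, R_W]^* = [A\varphi, R_W]$. Equating this with $[\varphi A, R_W]$ gives $[\varphi A - A\varphi, R_W] = 0$, i.e., $[[\varphi, A], R_W] = 0$, which is the claim.

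I do not anticipate any real obstacle here: once \eqref{nablaW} has been used to convert the Lie derivative into a covariant derivative plus a commutator, the argument is a short adjoint calculation, with the only care needed being the sign bookkeeping for $(\varphi A)^*$. Nothing in the manipulation depends on the dimension or on the sign of $c$, so the same argument works uniformly for $\CP^n$ and $\CH^n$ with $n \ge 2$, matching the stated hypothesis of the proposition.
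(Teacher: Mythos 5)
Your proof is correct and follows essentially the same route as the paper: both convert $\Lie_W R_W$ into $\nabla_W R_W$ plus a commutator with $\varphi A$ using \eqref{nablaW}, and then exploit the self-adjointness of $\nabla_W R_W$, $A$, $R_W$ and the skew-adjointness of $\varphi$ to extract $[R_W,[\varphi,A]]=0$. The only cosmetic difference is that you invoke the general formula for the Lie derivative of a $(1,1)$-tensor, whereas the paper expands $\Lie_W(R_W V)-R_W(\Lie_W V)$ directly; the computations are identical.
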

\begin{proof}

\begin{align*}\label{uglyv}
(\Lie_W R_W)V &= \Lie_W (R_W V) - R_W(\Lie_W V)\\
&= \nabla_W(R_W V) - \nabla_{R_W V} W -R_W (\nabla_W V) + R_W (\nabla_V W)\\
&= (\nabla_W R_W) V - \varphi A (R_W V) + R_W (\varphi A V)
\end{align*}
for all tangent vectors $V$.
(Here we have used (\ref{nablaW})).
Thus $\Lie_W R_W = 0$ if and only if $\nabla_W R_W = -[R_W, \varphi A]$.
Using the fact that $\nabla_W R_W$ is self-adjoint, we see that $\Lie_W R_W = 0$ implies that
$$(R_W \varphi A - \varphi A R_W)^t = (R_W \varphi A - \varphi A R_W),$$
which, once we use the fact that $A, R_W$ are self-adjoint while $\varphi$ is skew-adjoint, reduces to the desired identity.
\end{proof}
\subsection{The non-Hopf case}\label{LWRWnonHopf}

\begin{prop} \label{LWRWprop}
Suppose that $M^3$ is a non-Hopf hypersurface in $\CP^2$ or $\CH^2$ satisfying $\Lie_WR_W=0.$
Then, in a neighborhood of some point $p$, we have
(using the basic setup of \S \ref{basic})
\begin{itemize}
\item $\b$ and $\alpha$ are nonzero;
\item $\mu = 0$;
\item $\l = \nu$;
\item $\a\nu + c =0$
\end{itemize}
\end{prop}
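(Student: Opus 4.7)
The plan is to extract the algebraic consequences of the necessary condition $[R_W,[\varphi,A]]=0$ from Proposition \ref{necessary}, using the matrix forms of $A$ and $R_W$ in (\ref{shapematrix}) and (\ref{jacobimatrix}). Since $\varphi$ is skew-adjoint and both $A$ and $R_W$ are self-adjoint, a short calculation gives $[R_W,[\varphi,A]] = [R_W,\varphi A] - [R_W,\varphi A]^t$, so the condition is equivalent to $[R_W,\varphi A]$ being self-adjoint. This turns the problem into a purely algebraic one, with $\varphi$ represented in the frame $(W,X,Y)$ by the matrix with nonzero entries $\varphi X = Y$, $\varphi Y = -X$.

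Setting $p = \a\l + c - \b^2$, $q = \a\mu$, and $r = \a\nu + c$ for the entries of $R_W$, I would compute $[R_W,\varphi A]$ directly and read off the three independent off-diagonal symmetry conditions. After simplification, these reduce to
\begin{equation*}
q\b = 0, \qquad r\b = 0, \qquad (\nu - \l)(r-p) + 4 q\mu = 0.
\end{equation*}
The non-Hopf hypothesis $\b\ne 0$ immediately yields $\a\mu = 0$ and $\a\nu + c = 0$. Now if $\a = 0$, the second identity forces $c = 0$, contradicting our standing assumption $c\ne 0$; hence $\a\ne 0$, and therefore $\mu = 0$ (so $q = 0$). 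This already produces three of the four desired conclusions: $\a\ne 0$, $\mu = 0$, and $\a\nu + c = 0$.

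To obtain $\l = \nu$, I would observe that with $\mu = 0$ and $r = 0$ the third relation collapses to $(\l - \nu)p = 0$. If $\l\ne\nu$, then $p = 0$ too, so together with $q = r = 0$ this would force $R_W \equiv 0$ throughout a neighborhood of $p$, contradicting Proposition \ref{nonvanish}. Hence $\l = \nu$, completing the proof.

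The main obstacle is that the commutator identity of Proposition \ref{necessary} alone cannot separate the desired branch ($\l = \nu$) from the degenerate branch in which $R_W$ vanishes identically; eliminating the latter forces us to appeal to Proposition \ref{nonvanish}, whose proof rests on the exterior differential systems argument for Lemma \ref{parallellemma}. Apart from this point, the entire derivation is a linear-algebraic exercise with the given $3\times 3$ matrices.
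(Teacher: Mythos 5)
Your proposal is correct and takes essentially the same approach as the paper: both start from the necessary condition $[R_W,[\varphi,A]]=0$ of Proposition \ref{necessary}, extract the algebraic consequences in the frame $(W,X,Y)$ (the paper by applying the operator to $W$ and $X$, you by writing out the full commutator and its symmetry conditions, which amounts to the same computation), and both invoke Proposition \ref{nonvanish} to exclude the branch where $R_W$ would vanish identically and thereby conclude $\l=\nu$.
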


\begin{proof}
By Proposition \ref{necessary}, we get $R_W(\varphi A - A \varphi)W = 0$ which
implies that $R_W \varphi A W = 0$.  In the setup of \S \ref{basic} with $\b > 0$,
this gives $R_W Y = 0.$  From equation \eqref{jacobimatrix},
we get $\alpha \mu=0$ and $\alpha \nu + c = 0$; the latter guarantees
that $\alpha \ne 0$, and hence $\mu = 0$.  Following the same procedure with $X$, we get
$R_W (\varphi A - A \varphi)X = R_W (\l - \nu)Y = 0$.  Therefore, $(\varphi A - A \varphi)R_W X = 0$,
which reduces to $(\a \l + c - \b^2)(\l - \nu)= 0$.  If $\l \ne \nu$ at some point, then $\a \l + c - \b^2$ vanishes
in a neighborhood of this point and $R_W = 0$ there.  This contradicts Proposition \ref{nonvanish} so we must
conclude that $\l = \nu$ and that in a neighborhood of $p$, we have

\begin{equation} A =
 \begin {pmatrix} \a&\b& 0 \\
                \b &-\frac{c}{\a} & 0 \\
                            0&0&-\frac{c}{\a} \\
\end {pmatrix}\label{LWRWshapematrix}
\end{equation}
and
\begin{equation} R_W =
 \begin {pmatrix}\label{LWRWjacobimatrix} 0&0& 0 \\
                0 &-\b^2 & 0 \\
                            0&0&0 \\
\end {pmatrix}.
\end{equation}
\end{proof}

However, the situation described in Proposition \ref{LWRWprop} cannot, in fact, occur.

\begin{lemma}\label{LWRWlemma}
There does not exist a hypersurface in $\CP^2$ or $\CH^2$ satisfying the conditions listed
in Proposition \ref{LWRWprop}.
\end{lemma}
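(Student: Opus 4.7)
The plan is to set up the Codazzi equations in an orthonormal moving frame adapted to the shape operator \eqref{LWRWshapematrix}, use them to express every unknown in terms of $\a$, $\b$, and $W\a$, and then apply $d^2=0$ to $\a$ to force a contradiction.

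Specifically, let $(\w^1, \w^2, \w^3)$ be the coframe dual to $(W, X, Y)$. Since the shape operator is prescribed by \eqref{LWRWshapematrix}, the identity $\nabla_Z W = \varphi A Z$ from \eqref{nablaW} immediately pins down
$$\w^2_1 = (c/\a)\,\w^3, \qquad \w^3_1 = \b\,\w^1 - (c/\a)\,\w^2,$$
leaving $\w^3_2 = p\,\w^1 + q\,\w^2 + r\,\w^3$ with three undetermined coefficients. Next I would apply the Codazzi equation
$$(\nabla_U A)V - (\nabla_V A)U = c\bigl(\langle U,W\rangle\varphi V - \langle V,W\rangle\varphi U - 2\langle\varphi U, V\rangle W\bigr)$$
to each of the three pairs $(W,X)$, $(W,Y)$, $(X,Y)$. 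This yields nine scalar equations that determine
$$p = -4\a - 3c/\a,\qquad q = \b + \tfrac{c^2}{\a^2\b},\qquad r = \tfrac{c\,W\a}{\a^2\b},$$
together with $X\a = 0$, $W\b = 0$, $X\b = c(W\a)/\a^2$, $Y\a = -3\a\b$, and $Y\b = \b^2 - c^2/\a^2$.

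The decisive integrability step is to compute $[X,Y]\a$ in two ways. On one hand, $[X,Y] = (2c/\a)W - qX - rY$, which after substituting $r$ gives $[X,Y]\a = (2c/\a)W\a + 3\a\b r = (5c/\a)W\a$. On the other hand, $X(Y\a) - Y(X\a) = -3\a X\b = -(3c/\a)W\a$. Equating the two forces $W\a = 0$, hence also $X\b = 0$ and $r = 0$. Now $d\a = -3\a\b\,\w^3$, and from the structure equations $d\w^i = -\w^i_j \wedge \w^j$ with the connection forms already in hand, one computes $d\w^3 = (4\a + 2c/\a)\,\w^1 \wedge \w^2$. The identity $d^2\a = 0$ then reduces to $\b(2\a^2 + c) = 0$.

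Since $\b \ne 0$, we must have $2\a^2 + c = 0$. In $\CP^2$, where $c > 0$, this is impossible. In $\CH^2$, where $c < 0$, the relation forces $\a$ to be a nonzero constant, so $Y\a = 0$; but $Y\a = -3\a\b$ with $\a \ne 0$ then yields $\b = 0$, contradicting the hypothesis $\b \ne 0$. The main obstacle is the bookkeeping of the Codazzi system, which involves nine sign-sensitive equations whose simultaneous solution must be extracted carefully; once the derivatives are tabulated, however, the argument collapses quickly via a single commutator identity together with one application of $d^2 = 0$.
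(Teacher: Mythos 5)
Your proof is correct --- I checked the nine scalar Codazzi equations against the frame data determined by \eqref{LWRWshapematrix} and \eqref{nablaW}, and your values of $p,q,r$ and of the derivatives $X\a$, $W\b$, $X\b$, $Y\a$, $Y\b$ all come out as you state; the commutator identity then forces $W\a=0$ and $d^2\a=0$ gives $\b(2\a^2+c)=0$, after which the endgame (constancy of $\a$ versus $Y\a=-3\a\b\ne0$) is airtight. The route is genuinely different in formalism from the paper's, though the underlying overdetermined system is the same. The paper works on the unitary frame bundle of the ambient space, encodes the hypothesis as a Pfaffian system with $(\b,\l)$ as extra coordinates, and finds after one prolongation a single nonvanishing torsion term $24c\b^2/\l$ that kills all integral manifolds at once; your nine Codazzi equations are exactly the content of $d\theta_1,d\theta_2,d\theta_3\equiv0$ there, and your $r$ (equivalently $W\a$) plays the role of the prolongation variable $\rho$. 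What your version buys is elementarity and self-containedness: it needs only the Codazzi equation, a Lie-bracket identity on functions, and $d^2=0$, with every intermediate quantity given explicitly, at the cost of having to first resolve $W\a=0$ and then run a second integrability check, and of a case split on the sign of $c$ at the end. The paper's EDS formulation is more systematic --- the contradiction drops out of one linear combination of structure equations without determining $\rho$ --- and it packages the computation in a form that parallels the proof of Lemma \ref{parallellemma}. Both arguments are complete; yours would serve as an independent verification of the computation in \S\ref{movingframes}.
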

We prove this in \S \ref{movingframes} using exterior differential systems.  Thus, we have,
\begin{prop} \label{LWRWmustbeHopf}
Let $M^3$ be a real hypersurface in $\CP^2$ or $\CH^2$ such that $\Lie_W R_W = 0.$  Then $M$ must be a Hopf
hypersurface.
\end{prop}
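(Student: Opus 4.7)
The plan is to argue by contradiction. Suppose $M^3$ satisfies $\Lie_W R_W = 0$ everywhere but is \emph{not} Hopf. Then there is a point $p \in M$ where the component of $AW$ orthogonal to $W$ is nonzero, i.e., where $\beta := |AW - \langle AW, W\rangle W| > 0$. By continuity of $A$ and $W$, the inequality $\beta > 0$ persists on an open neighborhood $\setU$ of $p$, and on $\setU$ we may construct the orthonormal frame $(W, X, Y)$ of \S \ref{basic} smoothly by setting $X = (AW - \alpha W)/\beta$ and $Y = \varphi X$. The shape operator then takes the matrix form \eqref{shapematrix} throughout $\setU$, and in particular $M$ is non-Hopf on $\setU$ in precisely the sense used in Proposition \ref{LWRWprop}.

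With the hypothesis $\Lie_W R_W = 0$ still holding on $\setU$, Proposition \ref{LWRWprop} applies and yields the structural conditions $\mu = 0$, $\lambda = \nu$, and $\alpha\nu + c = 0$ (with $\alpha, \beta$ nonzero) in a neighborhood of some point of $\setU$. But Lemma \ref{LWRWlemma} asserts that no hypersurface in $\CP^2$ or $\CH^2$ can satisfy those conditions. This is the desired contradiction, so $M$ must be Hopf.

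The real content of the argument is packaged in Lemma \ref{LWRWlemma}, whose proof is deferred to \S \ref{movingframes} and uses exterior differential systems. Granted that lemma, the present proposition reduces to the short continuity argument above, combined with the two preceding local results: Proposition \ref{necessary} supplies the algebraic identity $[R_W, [\varphi, A]] = 0$, and Proposition \ref{LWRWprop} uses it to extract the matrix normal form \eqref{LWRWshapematrix}. The genuinely hard step --- and the one that the EDS calculation must address --- is showing that the overdetermined system consisting of this normal form together with the Gauss and Codazzi equations admits no integral hypersurface. This obstruction is not visible pointwise; it only appears after differentiating and testing compatibility, which is exactly what the moving-frame computation in \S \ref{movingframes} is designed to do.
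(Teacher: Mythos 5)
Your proposal is correct and follows exactly the paper's route: the proposition is obtained by combining Proposition \ref{LWRWprop} (which extracts the normal form from $\Lie_W R_W=0$ in the non-Hopf case) with Lemma \ref{LWRWlemma} (the EDS nonexistence result), yielding a contradiction with the assumption that $M$ is not Hopf. The added detail about constructing the frame $(W,X,Y)$ on a neighborhood where $\beta>0$ is just the standard setup from \S\ref{basic} made explicit, so there is nothing substantively different here.
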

We classify such hypersurfaces in the next section.

\subsection{The Hopf case}\label{LWRWHopf}
Now consider a Hopf hypersurface $M^3$ in $\CP^2$ or $\CH^2$.  At any point of $M$, let $X$ be a unit
principal vector in $W^\perp$ and let $Y = \varphi X$.  Then, with respect to the frame $(W, X, Y)$, we
have
\begin{equation} A =
 \begin {pmatrix} \a&0& 0 \\
                0&\l & 0 \\
                            0&0&\nu \\
\end {pmatrix}\label{Hopfshapematrix}
\end{equation}
and
\begin{equation} R_W =
 \begin {pmatrix}\label{Hopfjacobimatrix} 0&0& 0 \\
                0 &\a\l+c & 0 \\
                            0&0&\a\nu+c \\
\end {pmatrix}.
\end{equation}
\noindent
By a straightforward calculation, we obtain
\begin{equation}
\begin{aligned}\label{LWRWHopfcondition}
[R_W, [\varphi, A]] W &= 0,\\
[R_W, [\varphi, A]] X &= -\a(\l - \nu)^2 Y,\\
[R_W, [\varphi, A]] Y &= \a(\l - \nu)^2 X.
\end{aligned}
\end{equation}
We are now ready to prove the following proposition.
\begin{prop} \label{LWRWHopfprop}
Let $M^3$ be a Hopf hypersurface in $\CP^2$ or $\CH^2$.  Then the identity  $\Lie_W R_W =0 $  is satisfied if and
only if at each point of $M$, one of the following holds:
\begin{itemize}\label{pseudoE}
\item $\a = 0$, $\l \ne \nu$ and $\l \nu = c$;
\item $\a = 0$, $\l = \nu$, and $\l^2 = c$;
\item $\a^2+4c = 0$ and $\l=\nu = \frac{\a}{2}$; or
\item $\a\ne 0$, $\a^2 +4c >0$, $\l = \nu$, and $\l^2 = \a\l + c$.
\end{itemize}.
\end{prop}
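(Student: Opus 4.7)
The plan is to prove both implications by combining Proposition \ref{necessary} with the explicit commutator formulas \eqref{LWRWHopfcondition} and the observation that in each of the four bullets, formula \eqref{Hopfjacobimatrix} forces $R_W$ to be a scalar multiple of the projection onto $W^\perp$.

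For necessity, I would apply Proposition \ref{necessary} to obtain $[R_W,[\varphi,A]]=0$, and then substitute into \eqref{LWRWHopfcondition} to read off $\a(\l-\nu)^2=0$. Hence at every point either $\a=0$ or $\l=\nu$. When $\a=0$, Proposition \ref{hopfpc} gives $\l\nu=c$; splitting on $\l\ne\nu$ versus $\l=\nu$ (the latter forcing $\l^2=c$) produces the first two bullets. When $\a\ne 0$ and $\l=\nu$, Proposition \ref{hopfpc} reduces to the quadratic $\l^2-\a\l-c=0$, so $\l=\tfrac12(\a\pm\sqrt{\a^2+4c})$; reality of $\l$ forces $\a^2+4c\ge 0$, yielding the third bullet when equality holds and the fourth otherwise.

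For sufficiency, I would observe that in all four cases the two non-trivial diagonal entries $\a\l+c$ and $\a\nu+c$ of \eqref{Hopfjacobimatrix} coincide, so $R_W=kP$, where $P=\operatorname{id}-W\otimes W^\flat$ is the orthogonal projection onto $W^\perp$ and $k$ is locally constant (equal to $c$ in the first two bullets, and to $\l^2$ in the last two, where $\l$ is pinned by a quadratic with constant coefficients since $\a$ is constant by the Hopf condition). Because $M$ is Hopf, $\nabla_W W=\varphi A W=0$, so $\nabla_W P=0$ and hence $\nabla_W R_W=0$. Moreover, $\varphi A$ annihilates $W$ and preserves $W^\perp$ (as $A$ does, $W$ being principal), so $\varphi A$ is block-diagonal with respect to $\operatorname{span}\{W\}\oplus W^\perp$ and commutes with $P$; thus $[R_W,\varphi A]=0$. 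Recalling from the proof of Proposition \ref{necessary} that $\Lie_W R_W=\nabla_W R_W+[R_W,\varphi A]$, we conclude $\Lie_W R_W=0$.

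There is no deep obstacle here; the main subtlety lies in the necessity direction, where one must verify that the four bullets are exhaustive and that the case $\a^2+4c<0$ is correctly excluded by reality. Sufficiency is then essentially automatic once one recognizes the structural identity $R_W=kP$: from that point on, the Hopf condition alone forces both $\nabla_W R_W$ and $[R_W,\varphi A]$ to vanish.
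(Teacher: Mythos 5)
Your proof is correct and follows essentially the same route as the paper: necessity by combining Proposition \ref{necessary}, the commutator formulas \eqref{LWRWHopfcondition}, and Proposition \ref{hopfpc}, and sufficiency by observing that in every case $R_W$ is a (locally constant) nonzero multiple of the projection onto $W^\perp$ and that the Hopf condition kills the remaining terms. The only cosmetic difference is that you package sufficiency via the identity $\Lie_W R_W=\nabla_W R_W+[R_W,\varphi A]$ from the proof of Proposition \ref{necessary}, whereas the paper evaluates $(\Lie_W R_W)V$ directly and reduces it to $-k\langle V,\varphi A W\rangle W=0$; both rest on the same observation.
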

\begin{proof}
The necessity of these conditions follows immediately from (\ref{LWRWHopfcondition}),
Proposition \ref{necessary} and Proposition \ref{hopfpc}.
Now suppose that these conditions are satisfied.

If $\a = 0$, we see that $R_W V = c V$ for all $V \in W^\perp$.
If $\a^2 + 4c = 0$, then $\lambda=\nu=\a/2$ and $\a\l+c = -c$, so that $R_W V = -c V$ for all $V \in W^\perp$.  In the remaining case,
$R_W V = \l^2 V$ for all $V \in W^\perp$.  In each case, there is a nonzero constant $k$ such that the
identity $R_W V = k V$ holds globally for all $V \in W^\perp$.  Then for any vector field $V \in W^\perp$,
we have, using (\ref{nablaW})
\begin{equation}
\begin{aligned}
(\Lie_W R_W)V &=  \Lie_W (k V) - k \left(\Lie_W V - \<\Lie_W V, W\> W\right) \\
&= k( \<\nabla_W V, W \> -\<\nabla_V W, W\>) W  \\
&= -k \< V, \nabla_W W \> W =  -k \<V, \varphi A W\> W = 0.
\end{aligned}
\end{equation}
Since $(\Lie_W R_W)W=0$ automatically, we have $\Lie_W R_W = 0$ as required.
\end{proof}

Note that according to Propositions 2.13 and 2.21 of \cite{kimryan2},
the conditions in Proposition \ref{LWRWHopfprop} are precisely the conditions for $M$ to be a pseudo-Einstein hypersurface.
Thus we have completed the proof
of Theorem \ref{LWRWtheorem}.

With a little additional work, we can now prove the analogue of Theorem \ref{LieParalleltheorem} for $n=2$.
\begin{theorem} \label{LieParallel2}
Let $M^3$ be a real hypersurface in $\CP^2$ or $\CH^2$.
Then the Lie derivative $\Lie_V R_W$  of the structure Jacobi operator cannot vanish for all
tangent vectors $V$.
\end{theorem}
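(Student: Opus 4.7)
My plan is to evaluate the hypothesis on the structure vector field itself and to exploit the freedom to prescribe first derivatives of a test vector field at a point independently of the vector field's value at that point. This will collapse the resulting constraint to $R_W \equiv 0$, contradicting Proposition \ref{nonvanish}.

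First I would compute, for an arbitrary vector field $V$,
\[
(\Lie_V R_W)(W) \;=\; \Lie_V(R_W W) - R_W(\Lie_V W) \;=\; -R_W[V, W],
\]
using that $R_W W = 0$. Since $[V, W] = \nabla_V W - \nabla_W V = \varphi A V - \nabla_W V$ by (\ref{nablaW}), the hypothesis $\Lie_V R_W = 0$ yields
\[
R_W\bigl(\nabla_W V - \varphi A V\bigr) \;=\; 0
\]
for every smooth vector field $V$ on $M$.

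Next I would argue pointwise. Fix $p \in M$ and an arbitrary $w \in T_p M$, and construct a smooth vector field $V$ near $p$ with $V(p) = 0$ and $(\nabla_W V)(p) = w$. This is routine in local coordinates, since the values of $V$ and the first derivatives of its components along $W$ at $p$ can be prescribed independently; for instance, if $x^1, \ldots, x^{2n-1}$ are local coordinates with $W|_p = \partial / \partial x^1$ and $x^j(p) = 0$, then $V := w^i x^1 \, \partial / \partial x^i$ has $V(p) = 0$ and $(\nabla_W V)(p) = w$. With this $V$, the term $\varphi A V$ vanishes at $p$, so the boxed identity reduces to $R_W|_p(w) = 0$. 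Since $w$ was arbitrary, $R_W|_p = 0$; since $p$ was arbitrary, $R_W \equiv 0$ on $M$, contradicting Proposition \ref{nonvanish}.

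There is no real obstacle in this argument; the entire content lies in the elementary observation that $V(p)$ and $(\nabla_W V)(p)$ may be specified independently. Equivalently, one could first invoke Theorem \ref{LWRWtheorem} to reduce to a pseudo-Einstein Hopf hypersurface on which $R_W$ acts on $W^\perp$ as a nonzero scalar, and then derive $\pi_{W^\perp}(\nabla_W V) = \varphi A V$ and contradict it by the same flexibility; but the direct route above is slightly shorter and avoids the case analysis of Proposition \ref{LWRWHopfprop}.
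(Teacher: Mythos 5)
Your argument is correct, but it takes a genuinely different and more economical route than the paper's printed proof. You test the hypothesis only on the single component $(\Lie_V R_W)W = -R_W[V,W]$ and exploit the fact that $V\mapsto \Lie_V R_W$ is not tensorial in $V$: choosing $V$ with $V(p)=0$ and $(\nabla_W V)(p)=w$ arbitrary kills the $\varphi A V$ term and forces $R_W|_p(w)=0$, hence $R_W\equiv 0$, contradicting Proposition \ref{nonvanish}. The paper's proof of Theorem \ref{LieParallel2} instead first deduces $\Lie_W R_W=0$, invokes Proposition \ref{LWRWmustbeHopf} to get the Hopf condition, runs through the cases of Proposition \ref{LWRWHopfprop}, and extracts a contradiction from $\langle(\Lie_V R_W)U,W\rangle=-k(\lambda+\nu)$, which in one case ($\alpha=0$, $\lambda\nu=c$) requires an appeal to the classification of Hopf hypersurfaces with constant principal curvatures; your argument bypasses all of that case analysis. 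In substance, yours is a self-contained version of the ``alternative proof'' the paper alludes to at the end of \S\ref{LieParallelSection}: Lemma \ref{LieParallellemma} achieves the same reduction by perturbing $V$ by a function $f$ (the same $1$-jet freedom you use), after which $R_W W=0$ forces $R_W=0$. Both routes ultimately rest on the exterior differential systems computations of \S\ref{movingframes} --- yours through Lemma \ref{parallellemma}, which underlies Proposition \ref{nonvanish}, the paper's through Lemma \ref{LWRWlemma}. What the paper's longer route buys is the classification content of Theorem \ref{LWRWtheorem} along the way; what yours buys is brevity and independence from the Hopf case analysis.
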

\begin{proof}
We suppose that $\Lie_V R_W$ vanishes for all $V$ and derive a contradiction.  First note that we must have
$\Lie_W R_W = 0$.  By Proposition \ref{LWRWmustbeHopf}, M must be Hopf.  Thus, the classification of
Proposition \ref{LWRWHopfprop} can be applied.  For any unit
vector field $V \in W^{\perp}$, and $U = \varphi V$, consider
\begin{equation}
\begin{aligned}
\<(\Lie_V R_W)U, W\> &=  \<(\Lie_V (R_W U) - R_W(\Lie_V U)), W\>\\
&= \<(k \Lie_V U -  k (\Lie_V U - \<\Lie_V U, W\> W)), W\>\\
&= k \< \Lie_V U, W \>\\
&= k \<\nabla_V U, W \> - k \<\nabla_U V, W\> \\
&= -k \< U, \varphi A V\>  +k \<V, \varphi A U\>.
\end{aligned}
\end{equation}
Now fix a particular point $p$, and suppose that $V$ is principal at $p$, with $AV = \l V$.
Then $U$ must also be principal at $p$.  Writing
$AU = \nu U$, we get
$$
\<(\Lie_V R_W)U, W\> = -k (\l + \nu).
$$
at $p$.
The right side of this equation is nonzero unless $\l = - \nu$.  Except possibly for the first case ($\alpha=0$,
$\l \ne \nu$, $\l\nu = c$) in Proposition \ref{LWRWHopfprop}, we have an immediate contradiction.
In the remaining case, our argument shows that the principal curvatures sum to zero everywhere (since $p$
was arbitrary).  However, $\l = -\nu$
locally would give $\l^2 = -c$ and force $\l$ and $\nu$ to be locally constant.
Since the well-known list of Hopf
hypersurfaces with constant principal curvatures does not admit this possibility
(see Theorem 4.13 of \cite{nrsurvey}), our proof is complete.
\end{proof}
%
\section{Lie parallelism for $n \ge 3$}\label{LieParallelSection}
The condition of ``Lie parallelism" (see \cite {Perez2005a}, p. 270) is very strong.
In fact, a tensor field of type $(1,1)$
will be Lie parallel if and only if it is a constant multiple of the identity.
\begin{lemma}\label{LieParallellemma}
Let $T$ be a tensor field of type $(1,1)$ on a manifold $M^n$, where $n \ge 2$.
Then the Lie derivative $\Lie_X T$  vanishes for all vector fields $X$ if and only if $T$ is
a constant multiple of the identity.
\end{lemma}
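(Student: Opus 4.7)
The easy direction is immediate: if $T = k\,\mathrm{Id}$ for a constant $k$, then $(\Lie_X T)(Y) = [X, kY] - k[X, Y] = (Xk)Y + k[X,Y] - k[X,Y] = 0$ for every $Y$, since $Xk = 0$. So the substance lies entirely in the converse.

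For the converse, my plan is to work pointwise using the following elementary observation: if $X$ vanishes at a point $p$, then in local coordinates $[X,Y]^i|_p = -Y^j(p)\,\partial_j X^i(p)$, so $[X,Y]_p = -(dX)_p(Y_p)$, where $(dX)_p \colon T_pM \to T_pM$ is the well-defined endomorphism obtained from the first-order jet of $X$ at $p$. Consequently, for such an $X$,
\begin{equation*}
(\Lie_X T)_p(Y_p) \;=\; [X, TY]_p - T_p[X,Y]_p \;=\; -(dX)_p(T_p Y_p) + T_p\bigl((dX)_p(Y_p)\bigr) \;=\; [T_p,(dX)_p](Y_p).
\end{equation*}
Since every linear endomorphism of $T_pM$ can be realized as $(dX)_p$ for some $X$ vanishing at $p$ (just take $X = x^i \, \partial/\partial x^j$ for appropriate index combinations in any chart centered at $p$), the assumption $\Lie_X T \equiv 0$ forces $T_p$ to commute with every element of $\mathrm{End}(T_pM)$. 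A standard fact from linear algebra then gives $T_p = k(p)\,\mathrm{Id}$ for some scalar $k(p)$.

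Having established that $T = k\,\mathrm{Id}$ pointwise for some function $k \colon M \to \R$, the final step is to show $k$ is constant. The formula above applied to an arbitrary vector field $X$ (no vanishing assumption) yields
\begin{equation*}
(\Lie_X T)(Y) = \Lie_X(kY) - k\,\Lie_X Y = (Xk)\,Y,
\end{equation*}
which must vanish for every $Y$, hence $Xk = 0$ for every vector field $X$. Since $M$ is connected, this forces $k$ to be constant, completing the proof.

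I do not expect real obstacles here: the only subtleties are making sure the jet-realization step produces every prescribed $(dX)_p$ (easy in coordinates) and citing the linear algebra fact that the centralizer of $\mathrm{End}(V)$ in itself is $\R\cdot\mathrm{Id}$. Both are entirely routine, which fits the role of the lemma as a warm-up observation motivating the stronger Theorem \ref{LieParallel2}.
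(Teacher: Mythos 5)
Your proof is correct, but it takes a genuinely different route from the one in the paper. You exploit the dependence of $\Lie_X T$ on the first-order jet of $X$ at a point: for $X$ vanishing at $p$ you get $(\Lie_X T)_p = [T_p,(dX)_p]$, realize every endomorphism of $T_pM$ as some $(dX)_p$, and conclude via the linear-algebra fact that the centralizer of $\mathrm{End}(T_pM)$ is $\R\cdot\mathrm{Id}$. The paper instead exploits the failure of $X\mapsto \Lie_X T$ to be tensorial under rescaling by a function: from the identity $(\Lie_{fX}T)Y = f(\Lie_X T)Y - df(TY)\,X + df(Y)\,TX$ and the hypothesis, it extracts $df(TY)\,X = df(Y)\,TX$, which upon choosing $f,Y$ with $df(Y)\ne 0$ exhibits \emph{every} tangent vector as an eigenvector of $T$, giving $T=\tau\,\mathrm{Id}$ directly without any appeal to a centralizer computation or to prescribing linearizations of vector fields in a chart. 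Both arguments finish identically, by applying the hypothesis once more to $T=\tau\,\mathrm{Id}$ to get $d\tau=0$ and invoking connectedness. Your approach is perhaps the more conceptual one (it is the standard ``isotropy action on tensors'' argument); the paper's is more self-contained, needing only jets of functions rather than vector fields with prescribed linear part, and avoids the (routine but unstated-in-the-paper) bump-function extension needed if ``vector field'' is read as globally defined. Neither difference is a gap; your only implicit debts are the extension of the coordinate fields $x^i\,\partial/\partial x^j$ to global vector fields agreeing with them near $p$, and the centralizer fact, both of which you flag and both of which are standard.
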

\begin{proof}
Let $X$ and $Y$ be vector fields and $f$ a real-valued function defined on an open set $U \subset M$.
Then, it is easy to check that the identity
\begin{equation}
(\Lie_{f X} T) Y = f (\Lie_X T) Y - df(TY) X + df(Y) T X
\end{equation}
holds.

Suppose now that $\Lie_V T = 0$ for all vector fields $V$.
Then
\begin{equation}
df(T Y) X =  df(Y) T X
\end{equation}
for all $X$, $Y$, $f$.  For a suitable choice of $Y$ and $f$, we can assume that $df(Y)$ is nonvanishing on $U$,
so that we can write $TX = \tau X$ for a function $\tau= df(TY)/df(Y)$.  Since $\tau$ can depend only on $X$,
it must be independent of $X$ and $Y$.  Therefore, there is a real-valued function $\tau$ such that $T = \tau I$.
Finally, for any vector field $V$,
we have
$$0 = (\Lie_V T)Y = \Lie_V (\tau Y) - \tau \Lie_V Y = d\tau(V) Y
$$
so that $\tau$ must be locally constant.  Conversely, the same equation shows that if $T$ is a constant multiple of the
identity, then $\Lie_V T = 0$.
\end{proof}

We are now in a position to prove our theorem.

\begin{theorem}\label{NewLieParalleltheorem}
Let $M^{2n-1}$, where $n \ge 3$, be a real hypersurface in $\CP^n$ or $\CH^n$.
Then the Lie derivative $\Lie_V R_W$  of the structure Jacobi operator cannot vanish for all
tangent vectors $V$.
\end{theorem}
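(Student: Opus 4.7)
The plan is to combine Lemma \ref{LieParallellemma} with Theorem \ref{OPStheorem} in a short two-step argument. Suppose, for contradiction, that $\Lie_V R_W=0$ for every tangent vector field $V$ on $M^{2n-1}$. Since $R_W$ is a tensor field of type $(1,1)$, Lemma \ref{LieParallellemma} immediately forces
\[
R_W = \tau\, I
\]
for some constant $\tau$.

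Next I would exploit the intrinsic identity $R_W W = \RR(W,W)W = 0$, which holds because of the skew-symmetry of the Riemann curvature tensor in the first two slots. Applying this to the expression $R_W W = \tau W$ gives $\tau=0$, so $R_W$ vanishes identically on $M$. In particular $\nabla R_W = 0$, i.e., $R_W$ is parallel. This directly contradicts Theorem \ref{OPStheorem}, which rules out a parallel structure Jacobi operator in dimensions $2n-1$ with $n \ge 3$.

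I do not expect any real obstacle: the only substantive ingredients are (i) Lemma \ref{LieParallellemma}, which reduces the Lie-parallel condition to scalar multiples of the identity, and (ii) the universal vanishing $R_W W = 0$, which forces the scalar to be zero. Both are already in hand, and the appeal to Theorem \ref{OPStheorem} closes the argument. The only point worth verifying carefully is that Lemma \ref{LieParallellemma} applies to tensor fields defined only on the open set where the hypothesis is invoked; since $R_W$ is globally defined and smooth, $\tau$ is locally constant and then constant on the connected manifold $M$, so the conclusion $R_W\equiv 0$ is genuinely global.
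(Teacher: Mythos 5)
Your proposal is correct and follows essentially the same route as the paper: apply Lemma \ref{LieParallellemma} to conclude $R_W=\tau I$, use $R_W W=0$ to get $\tau=0$, and then invoke Theorem \ref{OPStheorem} since $R_W\equiv 0$ is in particular parallel. Your added remarks about locality and connectedness are fine but not needed beyond what the paper already assumes.
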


\begin{proof}
  Suppose that $\Lie_V R_W = 0$ for all $V$.
  Applying the preceding lemma to the $(1,1)$ tensor field $R_W$, we get that $R_W$ is a constant
  multiple of the identity.  Since $R_W W = 0$, we have, in fact, that $R_W = 0$.  Our result is now
  immediate from Theorem \ref{OPStheorem}.
\end{proof}

We could proceed similarly in the $n=2$ case, invoking Proposition \ref{parallelprop}.  This would
provide an alternative proof of Theorem \ref{LieParallel2}.

\section{Differential Forms Calculations}\label{movingframes}

In this section, we prove Lemmas \ref{parallellemma} and \ref{LWRWlemma}
by analyzing the conditions that a moving frame along the hypersurface
would have to satisfy, as a section of the orthonormal frame bundle of the relevant
complex space form $\Mt = \CP^2$ or $\CH^2$.  The conditions proposed in the lemmas
will imply that the sections are integral submanifolds of certain exterior differential
systems on the frame bundle.  The generators of these systems are defined in terms
of the natural coframing on the frame bundle, which we will briefly review.

On the orthonormal frame bundle $\F_o$ of a $n$-dimensional Riemannian manifold $\Mt$,
we define the {\em canonical 1-forms} $\w^i$ and the {\em connection 1-forms}
$\w^i_j$ (where $1\le i,j,k \le n$) by the following properties:
if $(e_1,\ldots,e_n)$ is any orthonormal frame defined on an open set $U\subset \Mt$, and $f:U \to \F_o$
is the corresponding local section, then
\begin{align}
\bv &= (\bv \intprod f^* \w^k) e_k, \label{omegacanon}\\
\nat_\bv e_j &= (\bv \intprod f^* \w^k_j ) e_k, \label{omegaconn}
\end{align}
for any tangent vector $\bv$ at a point in $U$, where $\nat$ denotes
the Levi-Civita connection on $\Mt$ and we use the summation convention.
The connection forms satisfy $\w^j_i=-\w^i_j$.  The forms $\w^i$ and $\w^i_j$ (for $i>j$)
together form a basis for the cotangent space of $\F_o$ at each point.
They satisfy the structure
equations
\begin{align}d\w^i &= -\w^i_j \& \w^j,\\
d\w^i_j &= -\w^i_k \& \w^k_j + \Phi^i_j,
\end{align}
where the 2-forms $\Phi^i_j$ pull back along any section to
give the components of the curvature tensor with respect to the corresponding
frame, i.e., $f^* \Phi^i_j(e_k,e_\ell) = \langle e_i, \RRt(e_k, e_\ell) e_j\rangle$.

In our case, $n=4$ and $\Mt$ is a complex space form.
We will use moving frames that are adapted to the complex structure on $\Mt$ in the following
way:
$$e_4 = \JJ e_1, \qquad e_3 = \JJ e_2.$$
We will refer to these as {\em unitary frames}, and let $\F_u \subset \F_o$ be the
sub-bundle of such frames.  We restrict the canonical and connection forms to $\F_u$ without
change of notation.  The structure group of this sub-bundle is
the 4-dimensional group $U(2)\subset SO(4)$.  Because $\JJ$ is parallel,
only the connection forms $\w^3_2$, $\w^4_1$, $\w^4_2$, $\w^4_3$ are linearly independent,
the remaining forms satisfying the relations
$$\w^2_1 = -\w^4_3, \qquad \w^3_1 = \w^4_2.$$
Using \eqref{ambientcurvature} and the structure equations, we find that the
curvature forms on $\F_u$ satisfy
\begin{align*}
\Phi^3_2 &= c (4 \w^3 \& \w^2 + 2 \w^4 \& \w^1), \\
\Phi^4_1 &= c( 4 \w^4 \& \w^1 +2\w^3 \& \w^2),\\
\Phi^4_2 &= \Phi^3_1 = c(\w^3 \& \w^1 + \w^4 \& \w^2), \\
\Phi^4_3 &= \Phi^1_2=c (\w^1 \& \w^2 + \w^4 \& \w^3). \\
\end{align*}

Along a real hypersurface $M \subset \Mt$, we will use an {\em adapted} moving frame,
meaning a unitary frame such that $e_4$ is normal to the hypersurface (and thus
$e_1$ is the structure vector).  It follows from \eqref{omegacanon} that $f^*\w^4 = 0$
and $f^*(\w^1 \& \w^2 \& \w^3)$ is a nonzero 3-form at each point.  It also follows from
\eqref{omegaconn} that
$$\quad f^*\w^4_i = h_{ij} f^* \w^j, \qquad 1 \le i,j \le 3,$$
where $h_{ij}$ are functions that give the components of the shape operator of $M$.
In particular, working in a neighborhood of a point where $AW \ne \alpha W$, let $W,X,Y$ be the unit
vector fields defined in \S\ref{basic}.  Then $e_1=W, e_2=X, e_3=Y$ and $e_4=\xi$
give the components of an adapted framing, and the $h_{ij}$ are the entries
of the matrix given by \eqref{shapematrix}.

We now have all the tools necessary to prove the two lemmas.
\begin{proof}[Proof of Lemma \ref{parallellemma}]
Again, let $W,X,Y$ be unit vector fields on an open set $U\subset M$, as in \S\ref{basic},
and let $f$ be the adapted moving frame such that $e_1=W, e_2=X, e_3=Y$.
Then
$f$ immerses $U$ as a three-dimensional submanifold of $\F_u$ on which $\w^4=0$
and the $\w^4_i$ satisfy
\begin{equation}\label{greeksetup}
\begin{aligned}
\w^4_1 &= \alpha \w^1 + \beta \w^2,\\
\w^4_2 &= \beta \w^1 + \lambda \w^2 + \mu \w^3,\\
\w^4_3 &=  \mu \w^2 + \nu \w^3,
\end{aligned}
\end{equation}
for some functions $\alpha,\beta,\lambda,\mu,\nu$ satisfying the conditions in the lemma.
Because we assume that $\alpha$ is nowhere vanishing, these conditions can be expressed as
$$\alpha, \beta \ne 0, \qquad \lambda = \dfrac{\beta^2-c}{\alpha}, \qquad \mu = 0, \qquad \nu=-\dfrac{c}{\alpha}.$$
Under these conditions, the functions $\alpha$ and $\beta$ completely determine the second fundamental
form (and hence, determine the hypersurface up to rigid motion).  The proof will proceed by deriving an overdetermined
system of differential equations that these functions must satisfy, and showing that no solutions exist satisfying
the nonvanishing conditions.

Take $(\alpha, \beta)$ as coordinates on $\R^2$, and let $\Sigma \subset \R^2$
be the subset where $\alpha \ne 0$ and $\beta\ne 0$.  On $\F_u \times \Sigma$
define the 1-forms
$$
\begin{aligned}
\theta_0 &= \w^4, \\
\theta_1 &= \w^4_1 - \alpha \w^1 - \beta \w^2,\\
\theta_2 &=\w^4_2 - \beta \w^1 - \dfrac{(\beta^2-c)}{\alpha}\ \w^2, \\
\theta_3 &=\w^4_3 + \dfrac{c}{\alpha}\ \w^3.
\end{aligned}
$$
Then for any adapted frame $f$ along $M$, the image of the
map $p \mapsto (f(p),\alpha(p),\beta(p))$ is a 3-dimensional submanifold in $\F_u\times \Sigma$
which is an {\em integral}
of the Pfaffian exterior differential
system generated by $\theta_0, \theta_1, \theta_2, \theta_3$.  In other words, all 1-forms in this
span pull back to be zero on this submanifold.  We will now investigate the
set of such submanifolds, satisfying the independence condition $\w^1 \& \w^2 \& \w^3 \ne 0$, which is
implied by (\ref{omegacanon}).

Along any such submanifold, the exterior derivatives of the $\theta_i$ must also vanish (i.e.,
they pull back to the submanifold to be zero).
Therefore, we will obtain additional differential forms that must vanish along integral manifolds
if we compute the derivatives of the 1-form generators modulo the algebraic ideal (under wedge product)
generated by those 1-forms.  In this case,
we compute $d\theta_0 \equiv 0$ and
\begin{equation}\label{I1struct}
\begin{aligned}
-d\theta_1 &\equiv \pi_1 \& \w^1 + \pi_2 \& \w^2 + \pi_3 \& \w^3,\\
-d\theta_2  &\equiv \pi_2 \& \w^1 +
\left(\dfrac{2 \beta}{\alpha}\ \pi_2 - \dfrac{(\beta^2-c)}{\alpha^2}\ \pi_1\right) \& \w^2
+\dfrac{\beta}{\alpha}\ \pi_3 \& \w^3, \\
-d\theta_3 &\equiv \pi_3 \& \left(\w^1 +\dfrac{\beta}{\alpha}\w^2\right)
+ \dfrac{c}{\alpha^2}\ \pi_1 \& \w^3
\end{aligned}
\mod \theta_0,\theta_1,\theta_2,\theta_3,
\end{equation}
where
\begin{align*}
\pi_1 &:= d\alpha + 3\dfrac{\beta(\alpha^2-c)}{\alpha}\ \w^3,\\
\pi_2 &:= d\beta + \dfrac{(3\alpha^2\beta^2 + c^2 -c\beta^2)}{\alpha^2}\ \w^3,\\
\pi_3 &:=\beta \w^3_2 +4\alpha\beta \w^1 +
\dfrac{(4\alpha^2\beta^2-c^2+c\beta^2)}{\alpha^2}\ \w^2.
\end{align*}
On any integral submanifold satisfying the independence condition, $\pi_1, \pi_2, \pi_3$
must restrict to be linear combinations of $\w^1, \w^2, \w^3$ at each point.  The
possibilities for these linear combinations are determined by the requirement that
the right-hand sides in \eqref{I1struct} must be zero.  In fact, there is only one parameter's
worth of possible values for the $\pi$'s, given by
\begin{equation}\label{I1pival}
\begin{aligned}
\pi_1 &= \rho(\alpha \w^1 + \beta \w^2),\\
\pi_2 &= \rho\left(\beta \w^1 + \dfrac{\beta^2+c}{\alpha}\w^2\right),\\
\pi_3 &= \dfrac{\rho c}{\alpha}\ \w^3
\end{aligned}
\end{equation}
in terms of the single parameter $\rho$.
In other words, along each submanifold there will be a function $\rho$ such
that the above equations hold.  (To see why, note that the vanishing of the third line of \eqref{I1struct} implies that
$\pi_1,\pi_3$ must be linear combinations of $\w^3$ and $\alpha \w^1 + \beta\w^2$.
On the other hand, linearly combining the first two lines to eliminate the $\pi_3 \& \w^3$
term reveals that $\pi_1,\pi_2$ must be linear combinations of $\w^1, \w^2$.  Thus,
$\pi_1$ must be a multiple of $\alpha \w^1 + \beta\w^2$.  By substituting
this into the right-hand sides of \eqref{I1struct}, we see that this multiple determines
the values of $\pi_2$ and $\pi_3$ at any point.)

Just as we did with $\alpha$ and $\beta$,
we introduce $\rho$ as a new coordinate, and define the following
1-forms on $\F_u \times \Sigma \times \R$:
\begin{align*}
\theta_4 &= \pi_1 - \rho(\alpha \w^1 + \beta \w^2),\\
\theta_5 &= \pi_2 -\rho\left(\beta \w^1 + \dfrac{\beta^2+c}{\alpha}\w^2\right),\\
\theta_6 &= \pi_3 - \dfrac{\rho c}{\alpha}\ \w^3.
\end{align*}
Then for any adapted framing $f$ along $M$ satisfying our assumptions, the image of the map
$p \mapsto (f(p),\alpha(p),\beta(p),\rho(p))$ is an integral submanifold
of the Pfaffian system defined by the 1-forms $\theta_0, \ldots, \theta_6$.
(In technical terms, this system is the {\em prolongation} of the previous one.)

As before, we compute the exterior derivatives of these 1-forms modulo themselves.
We find that
$$d\theta_4 \& \left(\alpha \w^1 + \beta \w^2\right)\equiv
\dfrac{8 c (\alpha^2-c) \rho}{\alpha}\ \w^1 \& \w^2 \& \w^3$$
modulo $\theta_0, \ldots, \theta_6$,
indicating that any integral submanifold satisfying the independence
condition must have $\rho(\alpha^2 - c)=0$ at each point.  (Recall that
the ambient curvature $c$ is nonzero.)
If $\rho \ne 0$ at a point on the submanifold, then $\alpha^2 =c$ on an open set about that point.
However, we compute
$$d\left(\alpha \theta_5  - \beta \theta_4 \right) \& \w^2 \equiv
 \dfrac{2 c(\beta^2 - 2(\alpha^2-c))\rho}{\alpha}\ \w^1 \& \w^2 \& \w^3,$$
which shows that $\beta$ must vanish on that open set, a contradiction.
Therefore, we conclude that $\rho$ must be identically zero on any integral satisfying
the independence condition.
We restrict the system to the submanifold where $\rho=0$.  Then we compute
$$
\begin{aligned}
d\left(\alpha \theta_5  - \beta \theta_4\right)
&\equiv \dfrac{c(2\beta^2+c)(4\alpha^2+\beta^2-c)}{\alpha^2}\ \w^1 \& \w^2,\\
d\theta_6 \& \w^2 &\equiv\dfrac{c(10\alpha^2 \beta^2 - c(4\alpha^2+\beta^2-c))}{\alpha^3}
\ \w^1 \& \w^2 \& \w^3.
\end{aligned}
$$
The first line can vanish only if
$4\alpha^2 + \beta^2=c$, whereupon the vanishing of the last line implies that
one of $\alpha$ or $\beta$ must be zero, a contradiction.  Thus, no hypersurfaces
exist satisfying the hypotheses of the lemma.
\end{proof}

\begin{proof}[Proof of Lemma \ref{LWRWlemma}]
Again, let $W,X,Y$ be unit vector fields on an open set $U\subset M$, satisfying
the conditions given in \S\ref{basic},
and let $f$ be the adapted moving frame such that $e_1=W, e_2=X, e_3=Y$.
Then $f$ immerses $U$ as a 3-dimensional submanifold of $ F_u$.  We note that $f^* \w^4 = 0$
and \eqref{greeksetup} hold for functions $\alpha, \beta,\lambda,\mu,\nu$
satisfying the conditions in the lemma, which can be expressed as
$$\beta,\lambda \ne 0, \qquad \alpha = -c/\lambda,  \qquad \nu=\lambda, \qquad \mu=0.$$

Thus, we set up an exterior differential system $\I$ on $\F_u \times \Sigma$ (where
now $\beta,\lambda$ are the nonzero coordinates on the second factor) generated
by 1-forms
$$
\begin{aligned}
\theta_0 &= \w^4, \\
\theta_1 &= \w^4_1 + (c/\lambda) \w^1 - \beta \w^2,\\
\theta_2 &=\w^4_2 - \beta \w^1 - \lambda \w^2, \\
\theta_3 &=\w^4_3 -\lambda \w^3.
\end{aligned}
$$
Then for any adapted frame $f$ along $M$, the
image of the map $p \mapsto (f(p),\beta(p),\lambda(p))$
will be a 3-dimensional integral submanifold of $\I$ satisfying
the usual independence condition.

We compute $d\theta_0 \equiv 0$ and
\begin{equation}\label{I2struct}
\begin{aligned}
-d\theta_1 &\equiv \dfrac{c}{\lambda^2}\ \pi_1 \& \w^1 + \pi_2 \& \w^2 + \pi_3 \& \w^3,\\
-d\theta_2 &\equiv \pi_2 \& \w^1 + \pi_1 \& \w^2,\\
-d\theta_3 &\equiv \pi_3 \& \w^1 + \pi_1 \& \w^3
\end{aligned}
\mod \theta_0,\theta_1,\theta_2,\theta_3,
\end{equation}
where
\begin{align*}
\pi_1 &:= d\lambda -3\beta\lambda\w^3,\\
\pi_2 &:= d\beta + (\lambda^2-\beta^2)\ \w^3,\\
\pi_3 &:=\beta \w^3_2 -\dfrac{\beta(3\lambda^2+4c)}{\lambda}\ \w^1 -
(\beta^2+\lambda^2)\w^2.
\end{align*}
In order for the pullbacks of the right-hand sides in \eqref{I2struct} to vanish, $\pi_1,\pi_2$ and
$\pi_3$ must be multiples of $\w^1, \w^2, \w^3$ respectively---and moreover
the multiples must all be the same at each point.  In other words,
there must be a single function $\rho$ such that
$\pi_i =\rho \,\w^i$, $i=1\ldots 3$, at each point.

Therefore, we define the prolongation of $\I$ on $F_u \times \Sigma \times \R$, with
$\rho$ as new coordinate on the last factor, as the Pfaffian system generated
by $\theta_0,\ldots, \theta_3$ and the new 1-forms
$$
\theta_4 = \pi_1 - \rho\, \w^1, \quad
\theta_5 = \pi_2 -\rho\, \w^2, \quad
\theta_6 = \pi_3 -\rho\, \w^3.
$$
Now we compute
$$d\theta_5 \& \w^3 + d\theta_6 \& \w^2 \equiv 24\ \dfrac{c\beta^2}{\lambda}\ \w^1 \& \w^2 \& \w^3$$
modulo $\theta_0,\ldots, \theta_6$.  Since $\beta \ne 0$, this shows that
no integral submanifold of the prolongation can satisfy the independence condition.
Hence no hypersurfaces exist satisfying the hypothesis of the lemma.
\end{proof}

\end{document}